\newcommand{\trace}{{\mbox{\textrm{Tr}}}}
\newcommand{\bfalpha}{{\mbox{\boldmath $\alpha$}}}
\newcommand{\bfxi}{{\mbox{\boldmath $\xi$}}}
\newcommand{\bfbeta}{{\mbox{\boldmath $\beta$}}}
\newcommand{\bfgamma}{{\mbox{\boldmath $\gamma$}}}
\newcommand{\st}{{\rm s.t.}}
\newcommand{\ba}{\mathbf{a}}
\newcommand{\bA}{\mathbf{A}}
\newcommand{\by}{\mathbf{y}}
\newcommand{\bx}{\mathbf{x}}
\newcommand{\bC}{\mathbf{C}}
\newcommand{\bD}{\mathbf{D}}
\newcommand{\bH}{\mathbf{H}}
\newcommand{\bI}{\mathbf{I}}
\newcommand{\bV}{\mathbf{V}}
\newcommand{\bX}{\mathbf{X}}
\newcommand{\bY}{\mathbf{Y}}
\newcommand{\bw}{\mathbf{w}}
\newcommand{\bB}{\mathbf{B}}
\newcommand{\bh}{\mathbf{h}}
\newcommand{\be}{\mathbf{e}}
\newcommand{\bhw}{\widehat{\mathbf{w}}}
\newcommand{\bhX}{\widehat{\mathbf{X}}}
\newcommand{\bhH}{\widehat{\mathbf{H}}}
\newcommand{\bhx}{\widehat{\mathbf{x}}}
\newcommand{\bbE}{{\mathbb{E}}}
\newcommand{\cI}{\mathcal{I}}
\newcommand{\chI}{\widehat{\mathcal{I}}}
\newcommand{\cO}{\mathcal{O}}
\newcommand{\cM}{\mathcal{M}}
\newcommand{\cN}{\mathcal{N}}
\title{Semidefinite approximation for mixed binary quadratically constrained quadratic
programs\thanks{This research is supported in part by the US AFOSR, grant number FA9550-12-1-0340 and the National Science Foundation, grant number DMS-1015346, and by
the Chinese NSF under the grant 11101261, 11371242 and the First-class Discipline of Universities in Shanghai.
This work was done during a visit by the first author to the Department of Electrical and
Computer Engineering, University of Minnesota.}}
\author{Zi Xu\thanks{Department of Mathematics, College of Sciences, Shanghai University, Shanghai, 200444, China.
({\tt xuzi@shu.edu.cn}). }\and Mingyi Hong\thanks{Department of Electrical and Computer Engineering, University of Minnesota, Minneapolis, MN 55455, USA. ({\tt \{mhong,luozq\}@umn.edu}).}\and Zhi-Quan Luo$^\ddag$}
\begin{document}

\maketitle

\begin{abstract}
Motivated by applications in
wireless communications, this paper develops semidefinite
programming (SDP) relaxation techniques for some mixed binary
quadratically constrained quadratic programs (MBQCQP) and analyzes their
approximation performance. We consider both a minimization and a maximization
model of this problem. For the minimization model, the objective is to
find a minimum norm vector in $N$-dimensional real or complex
Euclidean space, such that $M$ concave quadratic
constraints and a cardinality constraint are satisfied with both binary and continuous variables. By employing a special randomized rounding procedure, we show
that the ratio between the norm of the optimal solution of the minimization
model and its SDP relaxation is upper bounded by $\cO(Q^2(M-Q+1)+M^2)$
in the real case and by $\cO(M(M-Q+1))$
in the complex case. For the
maximization model,  the goal is  to find a maximum norm vector
subject to a set of quadratic constraints and a cardinality
constraint with both binary and continuous variables. We show that
in this case the approximation ratio is bounded from below by $\cO(\epsilon/\ln(M))$ for both the
real and the complex cases. Moreover, this ratio is tight up to a constant factor.
\end{abstract}

\begin{keywords}
nonconvex quadratic constrained quadratic programming, semidefinite programming relaxation,
approximation bound, NP-hard
\end{keywords}

\begin{AMS}
90C22, 90C20, 90C59
\end{AMS}

\pagestyle{myheadings}
\thispagestyle{plain}
\markboth{Z. XU, M. HONG and Z.-Q. LUO}{SEMIDEFINITE APPROXIMATION FOR QCQP}

\section{Introduction}
Motivated by applications in wireless communications, we study in this paper
two classes of mixed binary nonconvex quadratically constrained quadratic programming (MBQCQP)
problems, where the objective functions are quadratic in the continuous
variables and the constraints contain continuous and binary
variables. Although these two classes of optimization problems are nonconvex,
they are amenable to semidefinite programming (SDP) relaxation.
The focus of our study is on the approximation bounds of
the SDP relaxation for both problems.

{\bf The minimization model.}
Consider the following MBQCQP problem:
\begin{align}
\min_{\bw \in \mathbb{F}^{N}, \mbox{\footnotesize\boldmath{$\beta$}}}&\quad\|\bw\|^2\nonumber\\
{\rm s.t.}&\quad \bw^H \bH_i \bw\ge\beta_i\cdot 1+(1-\beta_i)\cdot\epsilon,~i\in \mathcal{M}\label{Generalproblem_in}\\
&\quad \sum_{i\in\mathcal{M}}\beta_i\ge Q,\nonumber\\
&\quad \beta_i\in\{0,1\},~i\in \mathcal{M}\nonumber
\end{align}
where $\mathbb{F}$ is either the field of real numbers $\mathbb{R}$ or the field of complex numbers $\mathbb{C}$, $\mathcal{M}=\{1,\cdots, M\}$, $\bfbeta=(\beta_1, \cdots, \beta_M)^T$, $\bH_i$ $(i=1,\cdots, M)$ are $N\times N$ real symmetric or complex Hermitian positive semidefinite matrices,  $\|\cdot\|$ denotes the Euclidean norm in $\mathbb{F}^N$, $M$ and $Q$ are given integers satisfying $1\leq Q\leq M$, and $\epsilon$ is a given parameter satisfying $0\le \epsilon\le1$.
Throughout, we use the superscript $H$ to denote the complex Hermitian transpose.  Notice that the problem \eqref{Generalproblem_in} can be easily solved either when $N=1$ or $M=1$, by solving a maximum eigenvalue problem. Hence, we shall assume that $N\ge 2$ and $M\ge 2$ in the rest of the paper. We note that problem \eqref{Generalproblem_in} is in general NP-hard, due to the fact that one of its special cases with $Q=M$ is NP-hard; see \cite[Section
2]{Luo07approximationbounds}.

Our interest in problem \eqref{Generalproblem_in} is motivated by its
application in telecommunications. For example,  
consider a cellular network where $M$ users, each equipped with a single
receive antenna, are served by a base station (BS) with $N$ transmit antennas.
Assume that a linear transmit beam $\mathbf{w}\in\mathbb{C}^{N\times 1}$ is
used by the BS to transmit a common message to the users. Let
$\bar{\mathbf{h}}_i\in\mathbb{C}^{N\times 1}$ denote the complex
channel coefficient between the BS and user $i$, and let $n_i$
denote the thermal noise power at the receiver of user $i$. Using
these notations, the signal to noise ratio (SNR) at the receiver of
each user $i\in\mathcal{M}$ can be expressed as ${\rm
SNR}_{i}\triangleq\frac{|\bw^H\bar{\bh}_i|^2}{n_i}$. Such received
SNR level measures the quality of the received signal, and is
directly related to the effective rate of the communication. In
order to successfully decode the transmitted message, typically a
quality of service (QoS) requirement in the form of
$\frac{|\bw^H\bar{\bh}_i|^2}{n_i}\ge\gamma_i$ is imposed by each
user $i\in\cM$, where $\gamma_i$ is a predetermined QoS threshold.
Let us define $\bh_i\triangleq\frac{\bar{\bh}_i}{\sqrt{n_i
\gamma_i}}$ as user $i$'s normalized channel. When assuming that all
$M$ users are served by the BS, the classical physical layer
multicast problem can be formulated to the one that minimizes the
transmit power of the BS while maintaining the QoS requirements
\cite{Sidiropoulos06Multicast}:
\begin{align}
\min_{\bw}&\quad\|\bw\|^2\label{eqMulticast}\\
{\rm s.t.}&\quad {|\bw^H{\bh}_i|^2}\ge1,~i\in\cM\nonumber.
\end{align}
This problem is an NP-hard quadratically constrained quadratic
programming (QCQP) problem; see e.g., \cite{Sidiropoulos06Multicast,
Luo07approximationbounds}. It is a continuous homogeneous QCQP problem, which is a special case of the MBQCQP problem \eqref{Generalproblem_in} when we set $\bH_i=\bh_i\bh_i^H$ and $Q=M$.

However, when the number of users in the network is large, it is
usually not possible to simultaneously guarantee the QoS for all the
users. In this case, user admission control
should be implemented to select a subset of users to serve. For instance,
we can select to serve a subset of $Q$ users (with
$Q$ a given number and $Q\le M$). This results in the following joint physical
layer multicast and admission control problem:
\begin{align}
\min_{\bw\in\mathbb{C}^{N}, \bfbeta}&\quad\|\bw\|^2\label{problemMulticastCardinalityConstraints}\\
{\rm s.t.}&\quad {|\bw^H{\bh}_i|^2}\ge\beta_i,~i\in\cM\nonumber\\
&\quad \sum_{i\in\mathcal{M}}\beta_i\ge Q,\ \
\beta_i\in\{0,1\},~i\in\cM\nonumber.
\end{align}
In the above formulation, the binary variables
$\{\beta_i\}_{i\in\cM}$ indicate whether a particular user $i$
should be served -- when $\beta_i=0$, there is no guarantee that its
QoS constraint will be satisfied. Problem
\eqref{problemMulticastCardinalityConstraints} is closely related to a
different form of the {joint} admission control and
beamforming problem, in which the goal is to pick the {\it maximum
number} of users to serve, subject to their respective QoS
constraints plus the BS's power constraint \cite{Matskani08, Matskani09batch_adaptive, Liuyf12}.
Obviously, \eqref{problemMulticastCardinalityConstraints} is a special case of
the MBQCQP problem \eqref{Generalproblem_in} if we set $\bH_i=\bh_i\bh_i^H$, for
all $i\in\cM$ and $\epsilon=0$.

An effective approach to approximately solve the NP-hard problem
\eqref{eqMulticast} is to use the semidefinite
programming relaxation technique~\cite{luo10SDPMagazine} together with
a randomized rounding step. The idea is to first reformulate
the problem by introducing a rank-1 matrix $\bX=\bw\bw^H$. After
dropping the nonconvex rank-1 constraint on $\bX$, the relaxed
problem becomes an SDP, whose optimal solution ${\bar \bX}$ can be
efficiently computed. A randomization procedure then follows which
converts ${\bar \bX}$ to a feasible solution of \eqref{eqMulticast}. It
has been shown in \cite{Luo07approximationbounds} that such SDP
relaxation scheme generates high-quality solutions, whose worst case
performance bound can be explicitly characterized. Notice that if $\epsilon=1$ or $Q=M$, then problem \eqref{Generalproblem_in} reduces to a continuous homogeneous QCQP problem for which the theoretical ratio between its optimal solution and the SDP relaxation has been shown
to be upper bounded by $27M^2/\pi$ \cite{Luo07approximationbounds}.
However, for the general case when $0\leq\epsilon <1$ with both binary and continuous variables,
there is no known performance guarantees for the performance of SDP
relaxation techniques.

{\bf The maximization model.} Another interesting case of the MBQCQP
problem takes the maximization form as follows:
\begin{align}
\max_{\bw\in \mathbb{F}^{n}, \bfbeta}&\quad\|\bw\|^2\nonumber\\
{\rm s.t.}&\quad \bw^H \bH_i \bw\le\beta_i\cdot\epsilon + (1-\beta_i) \cdot 1,~i\in \cM\label{1.4}\\
&\quad \sum_{i\in\mathcal{M}}\beta_i\ge Q,\  \
 \beta_i\in\{0,1\},~i\in \cM\nonumber
\end{align}
where $0\leq\epsilon \leq 1$ and $1\leq Q\leq M$. The above MBQCQP problem \eqref{1.4}
arises naturally in the interference suppression problem in radar or wireless
communication.  Here, the interference
suppression is captured by the constraints \eqref{1.4},
in which the constants $\epsilon$ and $1$ represent two distinctive
suppression levels. The optimization problem becomes the one that maximizes the gain of the
antenna array while suppressive undesirable interferences. Notice that problem
\eqref{1.4} can be easily solved when $N=1$ or $M=1$. As a result we
assume $N\geq 2$ and $M\geq 2$ in the rest of the paper.

Notice that if $\epsilon=1$, then problem \eqref{1.4} reduces to a
continuous homogeneous QCQP problem for which the theoretical ratio
between its optimal solution and the SDP relaxation has been shown
to be bounded below by $\cO(1/\ln(M))$ \cite{Bental02,Luo07approximationbounds, zhang00DSP}.
However, for the general case when $0\leq\epsilon <1$, no approximation bounds
for SDP relaxation are known.

{\bf Our Contributions.} In Sections $2$ and $3$, we develop two types of SDP relaxations for the optimization models \eqref{problemMulticastCardinalityConstraints} and \eqref{1.4}: One applies SDP relaxation to both the binary and the continuous variables, while the other way simply relaxes binary variables to continuous variables and uses the SDP relaxation for the other continuous variables. Interestingly, we prove that these two types of SDPs are equivalent.
 Given an optimal solution of the relaxed problem, we devise a novel randomization procedure to generate approximate solutions for the original NP-hard MBQCQP problems. 
 Moreover, we analyze the quality of such approximate solutions by
deriving  bounds on the approximation ratios between the optimal solution of
the two MBQCQP problems and those of their corresponding SDP relaxations. Our main results
are as follows: {i)} For the problem \eqref{Generalproblem}, when $\epsilon=0$,
the approximation ratio is upper bounded by $\cO(Q^2(M-Q+1))$ for the real case
and by $\cO(Q(M-Q+1))$ for the complex case; when $0<\epsilon<1$, the approximation ratio is upper bounded by  $\cO(Q^2(M-Q+1)+M^2)$ for the real case and by  $\cO(M(M-Q+1))$ for the complex case; { ii)} For the problem \eqref{1.4}, the ratio can be arbitrarily bad when $\epsilon=0$;
otherwise it is bounded from below by $\cO(\epsilon/\ln(M))$ for both the
real and the complex cases. To the best of knowledge, our analysis is the first
attempt to rigorously characterize the approximation bounds for
MBQCQP-type problems and their SDP relaxations.

{\bf Related Literature.} There is a
sizeable literature on the quality bounds of SDP relaxation for
solving nonconvex QCQP problems, including the works of Luo et al.
\cite{Luo07approximationbounds},  Nemirovski et al.
\cite{Nemirovski99},  A. Ben-Tal et al. \cite{Bental02}. Moreover, for the max-cut problem,
which is a special QCQP problem with only discrete
variables, Goemans and Williamson \cite{Goemans:1995} showed that
the ratio of the optimal value of SDP relaxation over that of the
original problem is bounded below by $0.87856\ldots$. For closely
related results, see \cite{ye01_699, Frieze:1995}. Moreover, there may not be any meaningful worst-case approximation ratio between certain discrete quadratic optimization problems and their semidefinite relaxation (see, e.g., Proposition 3.1 in \cite{So2010} or the discussion in \cite{Kisialiou2010}).
In the absence of the discrete constraints,
Beck and Teboulle \cite{Beck} considered the
continuous nonconvex problem of minimizing the ratio of two
nonconvex quadratic functions over a possibly degenerate ellipsoid,
and showed that the SDP relaxation can return exact solutions under
a certain condition. He et al. \cite{He2008} showed that the ratio
between the optimal value of a homogeneous continuous QCQP problem
and its the SDP relaxation is upper bounded by $\cO (M^2)$ (resp.
$\cO(M)$) in the real (resp. complex) case, if all but one of the
quadratic constraints are convex. However, for the mixed binary and
continuous type QCQP problems, there is no known approximation
bounds for SDP relaxations.

Another popular method to approximately solve the MBQCQP problem
is to relax the 0-1 variables
to continuous variables in the interval $[0,1]$. However, this approach is effective only
when the  resulting continuous relaxation problem is itself convex.
Under this assumption, a variety of algorithms have been proposed,
including branch and bound~\cite{Stubbs},
outer approximation~\cite{Duran1986}, the extended cutting-plane
method~\cite{Westerlund}, and so on. Unfortunately, for the two optimization
models \eqref{Generalproblem} and \eqref{1.4} considered in this paper,
their continuous relaxations are nonconvex.  In this case, there is no known approximation quality analysis results. However, surprisingly, we will show that for our two models, to do the continuous relaxation and the SDP relaxation for the discrete variables are equivalent. Moreover, we obtain some approximation bounds for the SDP relaxation.
In a recent paper, Billionnet et al. \cite{Billionnet2009} proposed a
method called Quadratic Convex Reformulation (QCR) to convert
nonconvex 0-1 QP problems to convex ones by using SDP, rather than
merely relax them. In their follow-up paper, Billionnet et al.
\cite{Billionnet2012} extended the QCR framework from 0-1 QP to mixed-integer
quadratic programming (MIQP).  However, their approach works only
under certain restrictive assumptions, e.g.,  the objective and the constraints
corresponding to the continuous variables are convex, which unfortunately do not
hold in our current context. Some alternative approaches to
MBQCQP have recently appeared in \cite{Burer2011, Saxena10,
Saxena11}. More detailed reviews of recent progress on related problems
can be found in the excellent surveys \cite{Burer2012, Hemmecke, Koppe}.

{\bf Notations.} For a symmetric matrix $\mathbf{X}$,
$\mathbf{X}\succeq 0$ signifies that $\mathbf{X}$ is positive
semi-definite. We use $\trace\left[\mathbf{X}\right]$ and $\bX[i,j]$
to denote the trace and the $(i,j)$th element of a matrix $\bX$, respectively.
For a vector $\bx$, we use $\|\bx\|$ to denote its Euclidean norm,
and use $\bx[i]$ to denote its $i$th element. For a real vector
$\by$, use $\by_{[m]}$ to denote its $m$th largest elements. For a
complex scalar $x$, its complex conjugate is denoted by $\bar{x}$. The notation
$\mathbf{I}_n$ is used to denote a $n\times n$ identity matrix.
Given a set $\mathcal{A}$, $|\mathcal{A}|$ denotes the number of
elements in set $\mathcal{A}$. Also, we use $\mathbb{R}^{N\times M}$ and
$\mathbb{C}^{N\times M}$ to denote the set of real and complex
$N\times M$ matrices, and use $\mathbb{S}^{N}$ and
$\mathbb{S}^{N}_{+}$ to denote the set of $N\times N$ hermitian and
hermitian positive semi-definite matrices, respectively. We use
$\bbE(\cdot)$ to denote the expectation operator. Let $\be_i$ denote the $i$th unit
vector whose entries are all zero except $\be_i[i]=1$, and let $\be$ denote the all $1$
vector. Finally, we use the superscript $H$ and $T$ to denote the complex Hermitian transpose and transpose of a matrix or a vector respectively.
%

\section{Approximation bounds for the minimization model}
\subsection{Reformulation and two SDP relaxations}
In this section we consider the minimization
problem \eqref{Generalproblem_in} and its SDP relaxation.

We first consider the SDP relaxation for both the discrete and the continuous constraints.
Notice that, by monotonicity, we can assume without loss of generality
that the inequality constraint $\sum_{i\in\mathcal{M}}\beta_i\ge Q$ in
\eqref{Generalproblem_in} holds with equality, resulting in the following
equivalent formulation:
\begin{align}
\min_{\bw \in \mathbb{F}^{N}, \mbox{\footnotesize\boldmath$\beta$}}&\quad\|\bw\|^2\nonumber\\
{\rm s.t.}&\quad \bw^H \bH_i \bw\ge\beta_i\cdot 1+(1-\beta_i)\cdot\epsilon,~i\in \mathcal{M}\tag{P1}\label{Generalproblem}\\
&\quad \sum_{i\in\mathcal{M}}\beta_i= Q,\ \
  \beta_i\in\{0,1\},~i\in \mathcal{M}\nonumber
\end{align}
By performing a simple transformation $\alpha_i=2\beta_i-1$, for all
$i\in \mathcal{M}$, we convert the range of the binary variables to
$\{-1,1\}$. We further introduce an auxiliary binary variable
$\ell\in\{-1,1\}$, and transform the problem \eqref{Generalproblem}
equivalently to
\begin{align}
\min_{\bw\in\mathbb{F}^{N}, (\mbox{\footnotesize\boldmath$\alpha$},\ell)}&\quad\|\bw\|^2\nonumber\\
{\st}&\quad\bw^H \bH_i \bw + \frac{1}{4} (\alpha_i - (1-\epsilon)\ell)^2\ge 1+\frac{\epsilon^2}{4},\ i\in \cM\nonumber\\
&\quad \sum_{i=1}^{M}(\alpha_i+\ell)^2= 4Q\nonumber\\
&\quad \ell^2=1, \ \alpha^2_i=1,\ \  i\in\cM\nonumber.
\end{align}
To write the above problem in  a more compact form, we make the
following definitions:
\begin{align}
\bC_i&\triangleq\frac{1}{4}\be_i\be^T_i+\frac{(1-\epsilon)^2}{4}\be_{M+1}\be^T_{M+1}
-\frac{1-\epsilon}{4}\be_i\be^T_{M+1}-\frac{1-\epsilon}{4}\be_{M+1}\be^T_{i}\in\mathbb{R}^{(M+1)\times(M+1)}\nonumber\\
\bA_i&\triangleq\left[ \begin{array}{ll}
\bC_i& \bf{0} \\
\bf{0}&\bH_i
\end{array}\right]\in\mathbb{F}^{(M+1+N)\times(M+1+N)},\ 
\ \bB\triangleq\left[ \begin{array}{lll}
\bI& \be &\bf{0} \\
\be^T & M &\bf{0}\\
\bf{0}& \bf{0}&\bf{0}
\end{array}\right]\in\mathbb{R}^{(M+1+N)\times(M+1+N)}\nonumber\\
\bx^{(1)}&\triangleq[\bfalpha^T, \ell]^T,\ \bx^{(2)}\triangleq\bw,\ \bx\triangleq[\bx^{(1)T}, \bx^{(2)T}]^T\nonumber.
\end{align}

With these definitions, the problem can be written as the following
homogeneous QCQP problem:
\begin{align}
v^{\min}_{{\rm QP}}\triangleq\min_{\bx}&\quad \|\bx^{(2)}\|^2\nonumber\\
{\st}&\quad \bx^H\bA_i\bx\ge 1+\frac{\epsilon^2}{4}, \ i\in\cM\label{problemReformulate}\\
&\quad \bx^H\bB\bx = 4Q,\nonumber\\
&\quad (\bx[i])^2=1, \  i\in\cM\nonumber.
\end{align}

Let ${\bar \bx}$ be a global optimal solution of problem
\eqref{problemReformulate}, and
$v^{\min}_{{\rm QP}}$ be the optimal objective value.

To obtain an approximate solution for the nonconvex quadratic
problem \eqref{problemReformulate}, let us consider its
SDP relaxation. However, caution should be exercised when relaxing
\eqref{problemReformulate}---unlike the conventional QCQP problem
studied in, e.g., \cite{luo10SDPMagazine, Luo07approximationbounds},
the nonconvexity of our current problem arises from {\it both}
continuous \emph{and} binary variables. To proceed, let us introduce a new matrix
variable $\bX\triangleq\bx\bx^H$, which admits the following block structure
\begin{align}
\bX&=\left[ \begin{array}{ll}
\bX^{(1)}&\bX^{(3)}\\
(\bX^{(3)})^H&\bX^{(2)}
\end{array}\right]\in\mathbb{S}^{(M+1+N)\times(M+1+N)}_{+}\label{block}
\end{align}
where $\bX^{(1)}\triangleq\bx^{(1)}(\bx^{(1)})^H$,
$\bX^{(2)}\triangleq\bx^{(2)}(\bx^{(2)})^H$,
$\bX^{(3)}\triangleq\bx^{(1)}(\bx^{(2)})^H$, and the ranks of
matrices $\bX^{(1)}$, $\bX^{(2)}$ and $\bX$ are all $1$.
By dropping all the rank-1 constraints, we obtain the following SDP relaxation
problem for \eqref{problemReformulate}:
\begin{align}
\min&\quad \trace[\bX^{(2)}]\nonumber\\
{\st}&\quad \trace[\bA_i\bX]\ge 1+\frac{\epsilon^2}{4}, \ i\in\cM\label{problemSDR}\\
&\quad \trace[\bB\bX]= 4Q,\nonumber\\
&\quad \bX^{(1)}[i, i]=1, \quad i\in\cM, \quad \bX\succeq 0 \nonumber.
\end{align}
Let ${\tilde \bX}$ denote the optimal solution for this problem, from the block diagonal structure of the matrices $\bC_i,\bA_i,\bB$, it is easy to see that
${\tilde \bX}^{(1)}$ is a {\it real} symmetric PSD matrix, and that without loss of generality we can assume ${\tilde \bX}^{(3)}={\bf 0}$. Moreover, under this assumption, problem \eqref{problemSDR} can be equivalently reformulated as:
\begin{align}
v^{\min}_{{\rm SDP1}}\triangleq\min&\quad \trace[\bX^{(2)}]\nonumber\\
{\st}&\quad \trace \left[\bH_i{\bX}^{(2)}\right]\ge\left[\frac{1+\bX^{(1)}[i,M+1]}{2}\right]+\left[\frac{1-\bX^{(1)}[i,M+1]}{2}\right]\cdot \epsilon, \ i\in\cM\tag{{\rm SDP1}}\label{problemSDR1}\\
&\quad \sum_{i\in\mathcal{M}}\bX^{(1)}[i,M+1]= 2Q-M,\nonumber\\
&\quad \bX^{(1)}[i, i]=1,\ i\in\cM ,\quad \bX^{(1)}\succeq 0,\ \bX^{(2)}\succeq 0\nonumber,
\end{align}
where the variables are $\bX^{(1)}$ and $\bX^{(2)}$, corresponding to the SDP relaxation matrices for the discrete variables and the continuous variables respectively. Let ${\tilde \bX^{(1)}}$ and ${\tilde \bX^{(2)}}$ denote the optimal solution for this problem, and let
$v^{\min}_{{\rm SDP1}}$ denote its optimal objective value.

Alternatively, by relaxing the binary variables to continuous variables in the interval $[0,1]$, while still using SDP relaxation for the continuous variables, we can derive another SDP relaxation problem for \eqref{Generalproblem}:
\begin{align}
v^{\min}_{{\rm SDP2}}\triangleq\min&\quad \trace[\bX^{(2)}]\nonumber\\
{\st}&\quad \trace \left[\bH_i{\bX}^{(2)}\right]\ge \beta_i\cdot 1+(1-\beta_i)\cdot \epsilon, \ i\in\cM\tag{{\rm SDP2}}\label{problemSDR2}\\
&\quad  \sum_{i\in\mathcal{M}}\beta_i= Q,\quad 0\le \beta_i \le 1, \ i\in\cM\nonumber\\
&\quad \bX^{(2)}\succeq 0 \nonumber.
\end{align}
Let $\bar\bfbeta=(\bar{\beta}_1, \cdots, \bar{\beta}_M)^T$ and $\bar \bX^{(2)}$ denote the optimal solution for this problem, and let $v^{\min}_{{\rm SDP2}}$ denote its optimal objective value.
As is well known, SDP relaxation usually yields tighter bounds than the continuous relaxation in many cases. However, as shown in the following lemma,  the two SDP relaxations \eqref{problemSDR1} and \eqref{problemSDR2} for the problem \eqref{Generalproblem} are equivalent.
\begin{lemma}
 Let $(\tilde\bX^{(1)},\tilde\bX^{(2)})$ be an optimal solution of the problem \eqref{problemSDR1}, then $(\tilde{\bfbeta}, \tilde\bX^{(2)})$ is an optimal solution for \eqref{problemSDR2}, with $\tilde{\bfbeta}[i]=\frac{1}{2}+\frac{1}{2}\tilde\bX^{(1)}[i,M+1]$, $\forall \; i\in \cM$. The converse is also true. \label{eqivalentlemma}
\end{lemma}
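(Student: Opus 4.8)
The plan is to prove the equivalence by constructing two value- and feasibility-preserving maps between \eqref{problemSDR1} and \eqref{problemSDR2}, one in each direction, and then to read off the optimal-solution correspondence. The bridge is the affine bijection $\beta_i=\tfrac12+\tfrac12\bX^{(1)}[i,M+1]$, equivalently $\bX^{(1)}[i,M+1]=2\beta_i-1$. Under it, $\tfrac{1+\bX^{(1)}[i,M+1]}{2}=\beta_i$ and $\tfrac{1-\bX^{(1)}[i,M+1]}{2}=1-\beta_i$, so the quality-of-service constraint of \eqref{problemSDR1} becomes term-for-term the constraint $\trace[\bH_i\bX^{(2)}]\ge\beta_i+(1-\beta_i)\epsilon$ of \eqref{problemSDR2}, while $\sum_i\bX^{(1)}[i,M+1]=2Q-M$ becomes $\sum_i\beta_i=Q$. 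Since the objective $\trace[\bX^{(2)}]$ is common and involves neither $\bX^{(1)}$ nor $\bfbeta$, the only conditions not matched directly are the semidefinite/diagonal structure of $\bX^{(1)}$ on one side and the box constraint $0\le\beta_i\le1$ on the other; these are the two facts I must reconcile.

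For the direction \eqref{problemSDR2}$\to$\eqref{problemSDR1}, given a feasible $(\bar\bfbeta,\bar\bX^{(2)})$ I would explicitly complete $\bar\bfbeta$ to a positive semidefinite matrix. Writing $\gamma_i=2\bar\beta_i-1\in[-1,1]$ and $\bu=(\gamma_1,\dots,\gamma_M,1)^T\in\mathbb{R}^{M+1}$, set $\bX^{(1)}=\bu\bu^T+\sum_{i\in\cM}(1-\gamma_i^2)\be_i\be_i^T$. Nonnegativity of each $1-\gamma_i^2$ gives $\bX^{(1)}\succeq0$, and a direct computation yields $\bX^{(1)}[i,i]=1$, $\bX^{(1)}[M+1,M+1]=1$, and $\bX^{(1)}[i,M+1]=\gamma_i=2\bar\beta_i-1$. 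Hence $(\bX^{(1)},\bar\bX^{(2)})$ is feasible for \eqref{problemSDR1} with the same objective value, proving $v^{\min}_{{\rm SDP1}}\le v^{\min}_{{\rm SDP2}}$.

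For the reverse direction \eqref{problemSDR1}$\to$\eqref{problemSDR2}, the substitution already reproduces the objective and the matched QoS and cardinality constraints, so the only thing left to check---and the step I expect to be the main obstacle---is the box constraint $0\le\tilde\beta_i\le1$, i.e.\ $|\tilde\bX^{(1)}[i,M+1]|\le1$. Here I would use positive semidefiniteness of $\tilde\bX^{(1)}$ together with the normalization $\tilde\bX^{(1)}[M+1,M+1]=1$ coming from the relaxation of the auxiliary constraint $\ell^2=1$: the $2\times2$ principal minor indexed by $\{i,M+1\}$ is $\left[\begin{smallmatrix}1 & \tilde\bX^{(1)}[i,M+1]\\ \tilde\bX^{(1)}[i,M+1] & 1\end{smallmatrix}\right]$, whose nonnegativity forces $\tilde\bX^{(1)}[i,M+1]^2\le1$ and hence $\tilde\beta_i\in[0,1]$. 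This establishes feasibility of $(\tilde\bfbeta,\tilde\bX^{(2)})$ and the reverse inequality $v^{\min}_{{\rm SDP2}}\le v^{\min}_{{\rm SDP1}}$. Combining the two inequalities gives $v^{\min}_{{\rm SDP1}}=v^{\min}_{{\rm SDP2}}$; the solution correspondence then follows at once, since an optimal point of either problem maps to a feasible point of the other attaining the common optimum, hence optimal. The whole argument hinges on the normalization $\tilde\bX^{(1)}[M+1,M+1]=1$: without it the semidefinite constraint alone cannot bound $|\tilde\bX^{(1)}[i,M+1]|$, and this is exactly where care is needed.
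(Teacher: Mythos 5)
Your proof is correct and takes essentially the same route as the paper's: the paper also builds the two feasibility-preserving maps, completing $\bar\bfbeta$ to a PSD $\bX^{(1)}$ via exactly your rank-one-plus-diagonal matrix (written there as $\tilde\bY=(2\bar\bfbeta-\be)(2\bar\bfbeta-\be)^T+\Delta$ with corner entry $1$), and deducing $0\le\tilde\beta_i\le1$ from positive semidefiniteness plus the unit diagonal, using the Schur complement $\bY\succeq\ba\ba^T$ where you use the $2\times2$ principal minor --- a cosmetic difference. Your closing caveat is also apt: the paper's proof equally hinges on the normalization $\bX^{(1)}[M+1,M+1]=1$ (it writes the corner entry as $1$ in the block form of $\bX^{(1)}$), even though the constraint list of \eqref{problemSDR1} literally imposes unit diagonal only for $i\in\cM$.
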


\begin{proof}
Let $\bfgamma=(\gamma_1, \cdots, \gamma_M)^T$ with $\gamma_i=\frac{1}{2}+\frac{1}{2}\bX^{(1)}[i,M+1]$ ($i\in\cM$) and $\ba=(\bX^{(1)}[1,M+1], \cdots, \bX^{(1)}[M,M+1])^T$, then we have $\ba=2\bfgamma-\be$ and $\bX^{(1)}$ can be written as
\begin{align}
\bX^{(1)}&=\left[ \begin{array}{ll}
\bY& \ba \\
\ba^T&1
\end{array}\right],
\end{align}
where $\bY\in \mathbb{S}^{M\times M}$.
By Schur Complement, we know that $\bX^{(1)}\succeq 0$ is equivalent to $\bY\succeq \ba \ba ^T=(2\bfgamma-\be)(2\bfgamma-\be)^T$. With these definitions and observations, it can be concluded that \eqref{problemSDR1} is equivalent to the following problem:
\begin{align}
\min&\quad \trace[\bX^{(2)}]\nonumber\\
{\st}&\quad \trace \left[\bH_i{\bX}^{(2)}\right]\ge\gamma_i+(1-\gamma_i)\cdot \epsilon, \ i\in\cM\label{reform2.5}\\
&\quad \sum_{i\in\mathcal{M}}\gamma_i= Q,\nonumber\\
&\quad \bY[i, i]=1, i\in\cM ,\quad  \bY\succeq(2\bfgamma-\be)(2\bfgamma-\be)^T , \bX^{(2)}\succeq 0\nonumber.
\end{align}
Assume $(\tilde\bX^{(1)},\tilde\bX^{(2)})$ is an optimal solution for the problem \eqref{problemSDR1} with
\begin{align}
\tilde\bX^{(1)}&=\left[ \begin{array}{ll}
\tilde\bY & 2\tilde\bfbeta-\be \\
(2\tilde\bfbeta-\be)^T&1
\end{array}\right],
\end{align}
with $\tilde\bfbeta=\frac{1}{2}+\frac{1}{2}\tilde\bX^{(1)}[i,M+1]$ ($i\in\cM$), then $(\tilde\bY,  \tilde\bfbeta, \tilde\bX^{(2)})$ will be an optimal solution for \eqref{reform2.5}.
By $\tilde\bY[i, i]=1, i\in\cM$ and $\tilde\bY\succeq(2\tilde\bfbeta-\be)(2\tilde\bfbeta-\be)^T$, we have
$$
-\be\le2\tilde\bfbeta-\be \le \be,
$$
or equivalently $0\le \tilde\bfbeta_i\le 1$, $i\in\cM$. It follows that $(\tilde\bfbeta, \tilde\bX^{(2)})$ is a feasible solution of \eqref{problemSDR2}.
Thus, if $(\bar{\bfbeta}, \bar{\bX}^{(2)})$ is an optimal solution of \eqref{problemSDR2}, then
\begin{align}
\trace[\tilde\bX^{(2)}]\ge \trace[\bar \bX^{(2)}]\label{temp1}.
 \end{align}

Moreover, by the feasibility of $\bar{\bfbeta}$, we have
\begin{align}
-\be\le 2\bar{\bfbeta}-\be\le \be.
 \end{align}
Denote $\Delta\in \mathbb{S}_{+}^{M}$ as a diagonal matrix with its $i$-th diagonal element given by $1-(2\bar\bfbeta[i]-1)^2\ge 0$ , for all $i\in\cM$. Using this definition, we further define a matrix $\tilde{\bY}$ by
 \begin{align}
 \tilde{\bY}=(2\bar{\bfbeta}-\be)(2\bar{\bfbeta}-\be)^T+\Delta.
  \end{align}
 Then, it can be easily checked that $(\tilde{\bY}, \bar{\bX}^{(2)}, \bar{\bfbeta})$ is a feasible solution for \eqref{reform2.5}, and
 \begin{align}
 \trace[\bar \bX^{(2)}]\ge\trace[\tilde\bX^{(2)}]\label{temp2}.
 \end{align}
 By combining\eqref{temp1} and \eqref{temp2}, we have that $ \trace[\bar \bX^{(2)}]=\trace[\tilde\bX^{(2)}]$.
 Moveover, it can also be concluded that $(\tilde\bfbeta, \tilde\bX^{(2)})$ is an optimal solution for \eqref{problemSDR2}.

 The proof of the reverse part is similar and is omitted for space reason.
\end{proof}

By Lemma \ref{eqivalentlemma}, we have $v^{\min}_{{\rm SDP1}}=v^{\min}_{{\rm SDP2}}\le v^{\min}_{{\rm QP}}$. Due to its smaller problem size, the SDP relaxation problem \eqref{problemSDR2} is preferred.

{\bf Remark}: As suggested by an anonymous referee, we can use slightly different technique to derive another SDP relaxation for \eqref{Generalproblem}. Specifically, we first consider the following continuous relaxation of \eqref{Generalproblem}:
\begin{align}
\min &\quad\|\bw\|^2\nonumber\\
{\rm s.t.}&\quad \bw^H \bH_i \bw\ge\alpha_i,~i=1,\cdots, M\label{continue_relax}\\
&\quad \sum_{i=1}^{m}\min\{1, \alpha_i\}= Q+(M-Q)\cdot\epsilon,\nonumber\\
&\quad  \alpha_i\ge \epsilon,~ i=1, \cdots, M.\nonumber
\end{align}
This is a relaxation because any feasible solution $(\bw, \bfbeta)$ to the problem \eqref{Generalproblem} is feasible to problem \eqref{continue_relax} by letting $\bfalpha=\bfbeta+\epsilon(\be -\bfbeta)$. Then, we consider an SDP relaxation to the above continuous problem
\begin{align}
\min &\quad \trace{(\bX)}\nonumber\\
{\rm s.t.}&\quad \trace{(\bH_i \bX)}\ge\alpha_i,~i=1,\cdots, M,\label{SDP_continue_relax}\\
&\quad \sum_{i=1}^{m}\min\{1, \alpha_i\}= Q+(M-Q)\epsilon,\nonumber\\
& \quad \alpha_i\ge \epsilon, ~i=1, \cdots, M, \nonumber\\
& \quad \bX \succeq 0.\nonumber
\end{align}

Although \eqref{SDP_continue_relax} and \eqref{problemSDR2} are derived differently, they are equivalent in the sense that they have the same optimal solution matrix and hence the same optimal objective value. This can be verified easily (we omit the details here). It is also important to note that the two SDP problems have the same problem size and the same number of variables. In the rest of the paper we will use \eqref{problemSDR2} as the basis for our analysis.

 In the following, we aim to generate a feasible solution $\bx$ for
\eqref{Generalproblem} from ${(\bar\bfbeta, \bar \bX^{(2)})}$, and evaluate the quality of
such solution. In particular, we would like to find a constant
$\mu\ge 1$ such that
$$\|\bx^{(2)}\|^2\le \mu v^{\min}_{{\rm SDP2}}.$$
By using the fact that such generated solution is feasible for
problem \eqref{Generalproblem}, we have $v^{\min}_{{\rm QP}}\le
\|\bx^{(2)}\|^2$, which further implies that the same $\mu$ is an
upper bound of the SDP relaxation performance, i.e.,
\begin{align}
v^{\min}_{{\rm QP}}\le \mu v^{\min}_{{\rm SDP2}}\label{defratio}.
\end{align}
The constant $\mu$ will be referred to as the {\it approximation
ratio}.

\subsection{A new randomization procedure}\label{SecNewRand}
Upon obtaining the optimal solution ${(\bar\bfbeta, \bar \bX^{(2)})}$ of problem
\eqref{problemSDR2}, we propose to use the randomization procedure
outlined in Table \ref{tableRandomize} to obtain a feasible solution
for problem \eqref{Generalproblem}.
In essence, this procedure
consists of the following two parts.

{\bf Part 1)} This includes Steps S1 and S2 in Table
\ref{tableRandomize}, in which  we generate the binary variable
$\bx^{(1)}$ from ${\bar \bfbeta}$. It is easy to verify that $\bx^{(1)}$ generated in this
way satisfies the constraints $\sum_{i\in\mathcal{M}}\bx^{(1)}_i= Q$ and $\bx^{(1)}_i\in\{0,1\},~i\in \mathcal{M}$.

{\bf Part 2)} This includes Steps S3 and S4  in Table
\ref{tableRandomize}, where we generate the continuous variable
$\bx^{(2)}$. For all $i\in\cI$, we have $\bx^{(1)}_i=1$ and
\begin{align}
(\bx^{(2)})^H\bH_i\bx^{(2)}&=t^2\bfxi^H\bH_i\bfxi\nonumber\\
&\ge\frac{1}{\bfxi^H\bH_i\bfxi}\bfxi^H\bH_i\bfxi=1,
\end{align}
where the second inequality is due to the definition of $t$. Similarily, for all $i\not\in \cI$, we have $\bx^{(1)}_i=0$ and
\begin{align}
(\bx^{(2)})^H\bH_i\bx^{(2)}=t^2\bfxi^H\bH_i\bfxi\ge\frac{\epsilon}{\bfxi^H\bH_i\bfxi}\bfxi^H\bH_i\bfxi=\epsilon.
\end{align}
In summary, by using the randomization procedure in Table
\ref{tableRandomize}, we obtain a feasible solution $\bx\triangleq[\bx^{(1)T}, \bx^{(2)T}]^T$ to the problem
\eqref{Generalproblem}.

\begin{table}[htb]
\begin{center}
\caption{ The Randomization Procedure for the Minimization Model \eqref{Generalproblem_in}}
\label{tableRandomize} {
\begin{tabular}{|l|}
\hline
\\
S0: Solve the SDP relaxation problem \eqref{problemSDR2} to get ${(\bar{\bfbeta}, \bar \bX^{(2)})}$
with $\bar{\bfbeta}\in\mathbb{R}^{M\times 1}$\\
\qquad and ${\bar \bX}^{(2)}\in\mathbb{S}^{N\times N}_{+}$.\\

S1: Define an index set ${\cI}\triangleq\left\{i:\bar{\bfbeta}[i]\geq \bar{\bfbeta}_{[Q]},~i\in \mathcal{M}\right\}$.\\

S2: Set $\bx^{(1)}[i]=1$ for all $i\in {\cI}$;\   Set $\bx^{(1)}[i]=0$ for all $i\in {\mathcal{M}\setminus \cI}$. \\

S3: Generate a random vector $\bfxi\in \mathbb{F}^n$ from the normal distribution $\cN_\mathbb{F}(\bf{0}, {\bar \bX}^{(2)})$; \\

S4: Let $\bx^{(2)}=t\bfxi$,\\
\qquad with $t=\max\left\{\sqrt{\max_{i\in {\cI}}
\bigg\{\frac{1}{\bfxi^H \mbox{\boldmath $H$}_i\bfxi}\bigg \}}, \sqrt{\max_{i\in {\mathcal{M}\backslash{{\cI}}}}
\bigg\{\frac{\epsilon}{\bfxi^H \mbox{\boldmath $H$}_i\bfxi}\bigg \}}\right\}$.\\

S5: Let $\bx=(\bx^{(1)T}, \bx^{(2)T})^T$.\\

\\ \hline
\end{tabular}}
\end{center}
\end{table}

\subsection{Analysis of the approximation ratio}
In the cases when $Q=M$ or $\epsilon=1$, the problem \eqref{problemReformulate}
reduces to the following continuous QCQP
\begin{align}
\min_{\bw}&\quad\|\bw\|^2\nonumber\\
{\rm s.t.}&\quad {\bw^H{\bH_i}\bw}\ge1,~i\in\cM\nonumber,
\end{align}
for which the SDP relaxation is known to provide a $\frac{27M^2}{\pi}$
approximation in the real case and a $8M$ approximation in the
complex case \cite{Luo07approximationbounds}. In the sequel, we will
only consider the case when $Q\leq M-1$ and $0\le \epsilon<1$.

\subsubsection{The case of $0<\epsilon<1$}

Before presenting our main result, we first need a technical lemma
on the lower bounds of the values of the elements in the set
$\{{\bar \bfbeta}[i]\mid i\in \cM\}$.
\begin{lemma}\label{lemmaXBound}
For $Q\le M-1$, given a constant $\delta$ and define the set
\[\mathcal{R}(\delta)\triangleq\big\{i: {\bar \bfbeta}[i]\ge\delta,\
i=1,\cdots,M\big\}.\] Then we have $|\mathcal{R}(\delta)|\ge Q$ for
all $\delta$ that satisfies $0\le\delta\le \frac{1}{M-Q+1}$.
\end{lemma}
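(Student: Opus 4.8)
The plan is to prove this by a direct counting argument that uses only the feasibility of $\bar\bfbeta$ for the relaxation \eqref{problemSDR2}; the semidefinite variable $\bar\bX^{(2)}$ plays no role whatsoever. The two facts I need are that $\sum_{i\in\cM}\bar\bfbeta[i]=Q$ and that $0\le\bar\bfbeta[i]\le 1$ for every $i\in\cM$, both of which hold because $\bar\bfbeta$ is feasible for \eqref{problemSDR2}. The boundary case $\delta=0$ is immediate, since then $\mathcal{R}(0)=\cM$ (all entries are nonnegative) and $|\mathcal{R}(0)|=M\ge Q$, so I would concentrate on the range $0<\delta\le\frac{1}{M-Q+1}$.

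First I would argue by contradiction, assuming $|\mathcal{R}(\delta)|\le Q-1$. By the definition of $\mathcal{R}(\delta)$, every index in the complement $\cM\setminus\mathcal{R}(\delta)$ satisfies $\bar\bfbeta[i]<\delta$, and this complement is nonempty: it contains at least $M-(Q-1)=M-Q+1\ge 1$ indices. Next I would split the equality constraint $\sum_{i\in\cM}\bar\bfbeta[i]=Q$ along this partition, bounding the contribution of $\mathcal{R}(\delta)$ by $|\mathcal{R}(\delta)|\le Q-1$ (using $\bar\bfbeta[i]\le 1$) and bounding the contribution of the complement \emph{strictly} by $(M-|\mathcal{R}(\delta)|)\,\delta$ (using $\bar\bfbeta[i]<\delta$ together with the nonemptiness just noted). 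This yields the strict inequality $Q<|\mathcal{R}(\delta)|+(M-|\mathcal{R}(\delta)|)\,\delta$.

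It then remains to check that the right-hand side is at most $Q$. Since $\delta\le\frac{1}{M-Q+1}\le 1$, the factor $1-\delta\ge 0$, so the quantity $|\mathcal{R}(\delta)|\,(1-\delta)+M\delta$ is nondecreasing in $|\mathcal{R}(\delta)|$; substituting the largest admissible value $|\mathcal{R}(\delta)|=Q-1$ gives the upper bound $(Q-1)+(M-Q+1)\,\delta$, and the hypothesis $\delta\le\frac{1}{M-Q+1}$ forces $(M-Q+1)\,\delta\le 1$, so this upper bound is at most $Q$. Chaining this with the previous paragraph produces $Q<Q$, the required contradiction, and hence $|\mathcal{R}(\delta)|\ge Q$.

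I do not anticipate a genuine obstacle; the only points demanding care are to keep the complement inequality strict (this is precisely where the contradiction is extracted) and to confirm the complement is nonempty so that strictness is legitimate. It is worth emphasizing that the threshold $\frac{1}{M-Q+1}$ is exactly calibrated for this argument: capping the $\mathcal{R}(\delta)$ part at $Q-1$ full units creates a deficit of $1$ that must be offset by the complement mass $(M-Q+1)\,\delta$, and $\delta=\frac{1}{M-Q+1}$ is the largest value for which this offset cannot exceed $1$.
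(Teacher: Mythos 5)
Your proposal is correct and follows essentially the same argument as the paper: assume $|\mathcal{R}(\delta)|\le Q-1$ for contradiction, bound the entries in $\mathcal{R}(\delta)$ by $1$ and those in the complement strictly by $\delta$, and use $\delta\le\frac{1}{M-Q+1}$ to contradict $\sum_{i\in\cM}\bar\bfbeta[i]=Q$. The only differences are cosmetic—you make explicit the $\delta=0$ case and the monotonicity step in $|\mathcal{R}(\delta)|$ that the paper's inequality chain uses implicitly.
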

\begin{proof}
We prove this lemma by contradiction. Suppose that
$|\mathcal{R}(\delta)|\le Q-1$, or equivalently
$|\mathcal{M}\setminus\mathcal{R}(\delta)|\ge M-Q+1$. The
feasibility of $\bar \bfbeta$ implies that
\begin{equation}
\sum_{i=1}^{M}{\bar \bfbeta}[i]= Q. \label{eqXSumLowerBound}
\end{equation}
On the other hand, we know that ${\bar \bfbeta}[i]\leq 1$. This, plus the fact that
${\bar \bfbeta}[i]<\delta$ for all $i\not\in R(\delta)$, implies that
\begin{align}
\sum_{i=1}^{M}{\bar \bfbeta}[i]&<(M-Q+1)\delta+Q-1\nonumber\\
&\le (M-Q+1)\cdot \frac{1}{M-Q+1}+Q-1=Q,\nonumber
\end{align}
which contradicts \eqref{eqXSumLowerBound}.
\end{proof}

This result leads to the following characterization of the set $\cI$
generated by the proposed algorithm.

\begin{lemma}\label{lemma:2.2}
After Step S2 in the randomization procedure listed in Table
\ref{tableRandomize}, for all $ i\in \cI$, we have ${\bar \bfbeta}[i]\ge \frac{1}{M-Q+1}.$
\end{lemma}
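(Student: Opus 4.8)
The plan is to leverage the already-established Lemma \ref{lemmaXBound} together with the defining property of the index set $\cI$ from Step S1. The key observation is that $\cI$ collects precisely those indices whose $\bar\bfbeta$-value is at least the $Q$-th largest value $\bar\bfbeta_{[Q]}$, so it suffices to show that this threshold itself is bounded below by $\frac{1}{M-Q+1}$; the claimed bound for every $i\in\cI$ then follows immediately from the membership condition.

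First I would instantiate Lemma \ref{lemmaXBound} at the specific value $\delta=\frac{1}{M-Q+1}$, which is admissible since it satisfies $0\le\delta\le\frac{1}{M-Q+1}$ with equality. Under the standing hypothesis $Q\le M-1$, the lemma then guarantees $|\mathcal{R}(\frac{1}{M-Q+1})|\ge Q$, that is, at least $Q$ of the entries of $\bar\bfbeta$ are at least $\frac{1}{M-Q+1}$.

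Next I would translate this cardinality statement into a lower bound on the order statistic $\bar\bfbeta_{[Q]}$. Sorting the entries in nonincreasing order as $\bar\bfbeta_{[1]}\ge\cdots\ge\bar\bfbeta_{[M]}$, the inequality $\bar\bfbeta_{[Q]}\ge\frac{1}{M-Q+1}$ follows by a short contradiction argument: were $\bar\bfbeta_{[Q]}<\frac{1}{M-Q+1}$, then every entry of rank $Q$ or higher would be strictly below the threshold, leaving at most $Q-1$ entries above it and contradicting $|\mathcal{R}(\frac{1}{M-Q+1})|\ge Q$.

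Finally, by the definition of $\cI$ in Step S1, every $i\in\cI$ satisfies ${\bar\bfbeta}[i]\ge\bar\bfbeta_{[Q]}\ge\frac{1}{M-Q+1}$, which is exactly the assertion. I do not anticipate a genuine obstacle here, since the argument is essentially a direct chaining of Lemma \ref{lemmaXBound} with the selection rule; the only point requiring a little care, which I expect to be routine, is the order-statistics step converting the count $|\mathcal{R}(\delta)|\ge Q$ into the bound on $\bar\bfbeta_{[Q]}$, where one must handle possible ties at the threshold value $\frac{1}{M-Q+1}$ so that at least $Q$ entries, and hence the $Q$-th largest, genuinely clear the threshold.
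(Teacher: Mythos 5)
Your proposal is correct and follows essentially the same route as the paper: instantiate Lemma \ref{lemmaXBound} with $\delta=\frac{1}{M-Q+1}$ to get at least $Q$ entries of $\bar\bfbeta$ above the threshold, then conclude via the definition of $\cI$ as the indices attaining the $Q$ largest values. Your explicit order-statistics step bounding $\bar\bfbeta_{[Q]}$ merely spells out what the paper compresses into ``the claim follows immediately,'' so there is nothing to add.
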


\begin{proof}
Set $\delta=\frac{1}{M-Q+1}$. Lemma \ref{lemmaXBound} implies
that there exists at least $Q$ elements in $\bar\bfbeta$ that are greater than
$\frac{1}{M-Q+1}$. Since $\cI$ contains the $Q$ largest elements
of $\bar\bfbeta$, the claim follows immediately.
\end{proof}


We are now ready to present a key result of this section, which
essentially bounds the probability that both the normalization
constant and the objective value of the approximate solution are
small. We first consider the real case.


\begin{lemma}
Suppose $Q\leq M-1$ and $0<\epsilon<1$. Let $\bx$ be generated by the randomization
procedure listed in Table \ref{tableRandomize}. Then with
probability $1$, $\bx$ is well defined and feasible for
\eqref{Generalproblem}. Moreover, for any $\alpha>0$ and
$\mu>0$,
\begin{align}
&{\rm Prob}\left( t^2\leq \alpha, \ \|\bx^{(2)}\|^2\leq \mu {\rm Tr\,}[{\bar \bX}^{(2)}]\right)\nonumber\\
&\geq 1-Q\cdot\sigma_1(\alpha)-(M-Q)\cdot \sigma_2(\alpha)-\frac{\alpha}{\mu},\label{lemmainequality}
\end{align}
with
\begin{equation}
\sigma_1(\alpha)=\max\left\{\sqrt{\frac{1}{\alpha c(\epsilon)}}, \frac{2(r-1)}{\pi-2}\cdot \frac{1}{\alpha c(\epsilon)}\right\}, \label{sigma1_R}
\end{equation}
and
\begin{equation}
\sigma_2(\alpha)=\max\left\{\sqrt{\frac{1}{\alpha}}, \frac{2(r-1)}{\pi-2}\cdot \frac{1}{\alpha}\right\}, \mbox{ when $\mathbb{F}=\mathbb{R}$};\label{sigma2_R}
\end{equation}
and with
\begin{equation}
\sigma_1(\alpha)=\max\left\{\frac{4}{3}\cdot\frac{1}{\alpha c(\epsilon)}, 16(r-1)^2\cdot \frac{1}{\alpha^2c(\epsilon)^2}\right\}, \label{sigma1_C}
\end{equation}
and
\begin{equation}
\sigma_2(\alpha)=\max\left\{\frac{4}{3}\cdot\frac{1}{\alpha}, 16(r-1)^2\cdot \frac{1}{\alpha^2}\right\}, \mbox{ when $\mathbb{F}=\mathbb{C}$},\label{sigma2_C}
\end{equation}
where $r:=\min\{\rank ({\bar \bX}^{(2)}), \rank(\bH_i), i\in \cM\}$ and
\begin{equation}\label{eq:ce}
c(\epsilon):=\epsilon+\frac{1-\epsilon}{M-Q+1}.
\end{equation}
\label{lemma:2.3}
\end{lemma}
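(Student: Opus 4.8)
The plan is to decouple the two events in \eqref{lemmainequality}---a tail bound on the normalizing constant $t^2$ and a size bound on $\|\bx^{(2)}\|^2$---and to reduce the first to a union of small-ball probabilities for the Gaussian quadratic forms $\bfxi^H\bH_i\bfxi$. First I would dispose of well-definedness and feasibility. Since $\bfxi\sim\cN_\mathbb{F}(\bzero,\bar\bX^{(2)})$, each nonnegative quadratic form has mean $\bbE[\bfxi^H\bH_i\bfxi]=\trace[\bH_i\bar\bX^{(2)}]$, which is at least the right-hand side $\bar\bfbeta[i]+(1-\bar\bfbeta[i])\epsilon\ge\epsilon>0$ of the $i$th constraint of \eqref{problemSDR2}; being a nonnegative form with strictly positive mean, it is strictly positive almost surely, so the constant $t$ in Step S4 of Table \ref{tableRandomize} is finite and well defined with probability $1$. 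Feasibility of the resulting $\bx$ for \eqref{Generalproblem} is then exactly the two computations already recorded in Section \ref{SecNewRand}.

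Next I would set $A=\{t^2\le\alpha\}$ and $B=\{\|\bx^{(2)}\|^2\le\mu\,\trace[\bar\bX^{(2)}]\}$ and use the identity $\mathrm{Prob}(A\cap B)=\mathrm{Prob}(A)-\mathrm{Prob}(A\setminus B)$. The purpose of conditioning on $A$ is that there $\|\bx^{(2)}\|^2=t^2\|\bfxi\|^2\le\alpha\|\bfxi\|^2$, so $A\setminus B\subseteq\{\|\bfxi\|^2>(\mu/\alpha)\trace[\bar\bX^{(2)}]\}$; since $\bbE\|\bfxi\|^2=\trace[\bar\bX^{(2)}]$, Markov's inequality bounds this probability by $\alpha/\mu$. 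This yields $\mathrm{Prob}(A\cap B)\ge 1-\mathrm{Prob}(t^2>\alpha)-\alpha/\mu$ and reduces the claim to bounding $\mathrm{Prob}(t^2>\alpha)$.

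To control $\mathrm{Prob}(t^2>\alpha)$ I would unfold Step S4: $t^2>\alpha$ forces $\bfxi^H\bH_i\bfxi<1/\alpha$ for some $i\in\cI$, or $\bfxi^H\bH_i\bfxi<\epsilon/\alpha$ for some $i\in\cM\setminus\cI$. A union bound over the $Q=|\cI|$ and the $M-Q=|\cM\setminus\cI|$ indices splits the probability into the two promised groups of terms. To normalize, I would lower bound the mean $\trace[\bH_i\bar\bX^{(2)}]$ using feasibility in \eqref{problemSDR2}: for $i\in\cI$, Lemma \ref{lemma:2.2} gives $\bar\bfbeta[i]\ge 1/(M-Q+1)$, hence $\trace[\bH_i\bar\bX^{(2)}]\ge\epsilon+(1-\epsilon)/(M-Q+1)=c(\epsilon)$, whereas for $i\notin\cI$ the trivial $\bar\bfbeta[i]\ge0$ gives $\trace[\bH_i\bar\bX^{(2)}]\ge\epsilon$. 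Consequently, for $i\in\cI$ the event reduces to $\{\bfxi^H\bH_i\bfxi< s\,\trace[\bH_i\bar\bX^{(2)}]\}$ with $s=1/(\alpha c(\epsilon))$, and for $i\notin\cI$ to the same event with $s=1/\alpha$.

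The main obstacle is the remaining small-ball estimate: for $\bfxi\sim\cN_\mathbb{F}(\bzero,\bar\bX^{(2)})$ and $\bH_i\succeq0$, I must show
\[
\mathrm{Prob}\!\left(\bfxi^H\bH_i\bfxi< s\,\trace[\bH_i\bar\bX^{(2)}]\right)\le
\begin{cases}
\max\{\sqrt{s},\ \tfrac{2(r-1)}{\pi-2}\,s\}, & \mathbb{F}=\mathbb{R},\\
\max\{\tfrac{4}{3}\,s,\ 16(r-1)^2 s^2\}, & \mathbb{F}=\mathbb{C},
\end{cases}
\]
which, upon substituting the two values of $s$, reproduces exactly $\sigma_1(\alpha)$ and $\sigma_2(\alpha)$ in \eqref{sigma1_R}--\eqref{sigma2_C}. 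The plan is to diagonalize: writing the eigenvalues of $(\bar\bX^{(2)})^{1/2}\bH_i(\bar\bX^{(2)})^{1/2}$ as $\mu_1,\dots,\mu_k\ge0$, one has $\bfxi^H\bH_i\bfxi=\sum_j\mu_j U_j$ with the $U_j$ i.i.d.\ $\chi^2_1$ (real) resp.\ unit-mean exponential (complex) and $\sum_j\mu_j=\trace[\bH_i\bar\bX^{(2)}]$, the number of positive weights being at least $r$. One bound, corresponding to the most concentrated weight configuration, is independent of $r$ and reproduces the one-dimensional small-ball rate ($\sqrt{s}$ in the real case, $\tfrac{4}{3}s$ in the complex case); the other exploits the spread of mass over $r\ge2$ directions, and I expect to obtain the sharper term through a moment/Laplace-transform argument on $\sum_j\mu_j U_j$ (the half-normal variance $1-2/\pi=(\pi-2)/\pi$ being the natural source of the constant $2/(\pi-2)$ in the real case, and a second-moment estimate producing the factor $(r-1)^2$ in the complex case). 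Verifying that these two regimes combine into the stated $\max$ and that the constants come out exactly is where the real work lies; once the display above is in hand, the union bound and the Markov step assembled earlier deliver \eqref{lemmainequality} at once.
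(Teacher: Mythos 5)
Your proposal follows essentially the same route as the paper's proof: the same reduction via Markov's inequality and a union bound, the same use of Lemma \ref{lemma:2.2} and feasibility of $(\bar\bfbeta,\bar\bX^{(2)})$ to lower-bound $\trace[\bH_i\bar\bX^{(2)}]$ by $c(\epsilon)$ on $\cI$ and by $\epsilon$ off $\cI$, arriving at the identical small-ball events with $s=1/(\alpha c(\epsilon))$ and $s=1/\alpha$. The only difference is that the estimate you flag as ``the real work'' is precisely Lemmas 1 and 3 of \cite{Luo07approximationbounds} (with exactly your constants and your definition of $r$), which the paper simply cites rather than re-derives, so your planned Laplace-transform/moment argument is unnecessary.
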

\begin{proof}
Since the density of
$\bfxi^H\bH_i\bfxi$ is continuous, the
probability $\bfxi^H\bH_i\bfxi=0$ is zero
which implies that $t$ in Step S4 of the randomization procedure is
well defined. The feasibility of $\bx$ generated by the
randomization algorithm is shown in Section \ref{SecNewRand}.

For any $\alpha>0$ and $\mu>0$, we have
\begin{align}
&{\rm Prob}\left( t^2\leq \alpha, \ \|\bx^{(2)}\|^2\leq \mu \trace[{\bar \bX}^{(2)}]\right)\nonumber\\
\geq &\mbox{Prob}\left( t^2\leq \alpha, \ \|\bfxi\|^2\leq \frac{\mu}{\alpha} \trace[{\bar \bX}^{(2)}]\right)\nonumber\\
=&\mbox{Prob}\left(
\frac{1}{\bfxi^H\bH_i\bfxi}\leq \alpha,
\forall\; i\in \cI;\ \frac{\epsilon}{\bfxi^H\bH_i\bfxi}\leq \alpha,
\forall\; i\in {\mathcal{M}\backslash{{\cI}}};\ \|\bfxi\|^2\leq \frac{\mu}{\alpha}
\trace[{\bar \bX}^{(2)}]\right).\label{eqa:2.7}
\end{align}
By Lemma \ref{lemma:2.2}, we have
\begin{eqnarray}
\trace \left[\bH_i{\bar \bX}^{(2)}\right]&\geq& \bar\bfbeta[i]\cdot 1+ (1-\bar\bfbeta[i])\cdot \epsilon\nonumber\\
&=& (1- \epsilon)\cdot \bar\bfbeta[i]+\epsilon\nonumber\\
&\ge&\epsilon+\frac{1-\epsilon}{M-Q+1}:=c(\epsilon),\quad
\forall\; i\in \cI,\label{eqa:2.8}
\end{eqnarray}
where the first inequality follows from the feasibility of  ${(\bar\bfbeta, \bar \bX^{(2)})}$. On the other hand, we have
\begin{align}
\trace \left[\bH_i{\bar \bX}^{(2)}\right]&\geq (1- \epsilon)\cdot \bar\bfbeta[i]+\epsilon \geq \epsilon, \quad  \forall\; i\in {\mathcal{M}\backslash{{\cI}}}.\label{eqa:2.9}
\end{align}
From the way that the random sample $\bfxi$ is generated, we
have $\bbE\left(\bfxi\bfxi^H\right)={\bar \bX}^{(2)}$ and
$\bbE\left(\bfxi^H\bH_i\bfxi\right)=\trace
[\bH_i{\bar \bX}^{(2)}]$. By combining \eqref{eqa:2.7}, \eqref{eqa:2.8} and \eqref{eqa:2.9} , we have

\begin{align}
&{\rm Prob}\left( t^2\leq \alpha, \ \|\bx^{(2)}\|^2\leq \mu
\trace[{\bar \bX}^{(2)}]\right)\nonumber\\
\geq & {\rm Prob} \left( \bfxi^H\bH_i\bfxi\geq \frac{1}{\alpha c(\epsilon)}\trace [\bH_i{\bar \bX}^{(2)}],\ \forall \; i\in \cI;\right.\nonumber\\
&\quad\quad\ \ \left.\bfxi^H\bH_i\bfxi\geq \frac{1}{\alpha}\trace [\bH_i{\bar \bX}^{(2)}],\ \forall\; i\in {\mathcal{M}\setminus \cI}; \|\bfxi\|^2\leq \frac{\mu}{\alpha} \trace[{\bar \bX}^{(2)}]\right)\nonumber\\
\geq & 1-\sum_{i\in \cI} {\rm Prob} \left( \bfxi^H\bH_i\bfxi
\leq \frac{1}{\alpha c(\epsilon)}\trace [\bH_i{\bar \bX}^{(2)}]\right)\nonumber\\
&\ -\sum_{i\in \mathcal{M}\setminus\cI} {\rm Prob} \left( \bfxi^H\bH_i\bfxi
\leq \frac{1}{\alpha}\trace [\bH_i{\bar \bX}^{(2)}]\right)- {\rm Prob}\left( \|\bfxi\|^2> \frac{\mu}{\alpha} \trace[{\bar \bX}^{(2)}]\right)\nonumber\\
&\geq 1-Q\cdot\sigma_1(\alpha)-(M-Q)\cdot \sigma_2(\alpha)-\frac{\alpha}{\mu},\label{finalinequality}
\end{align}
where the last inequality \eqref{finalinequality} is from the
\cite[Lemmas 1 and 3]{Luo07approximationbounds} as well as the
Markov inequality; $\sigma_1(\alpha)$ and $\sigma_2(\alpha)$ are defined as in Lemma \ref{lemma:2.3}.
\end{proof}

Now we can use Lemma~\ref{lemma:2.3} to derive  a fundamental relationship between the
optimal objective value of the problem \eqref{Generalproblem} and the optimal objective value \eqref{problemSDR2} when $\epsilon\not=0$. We first consider the real case.

\begin{theorem}\label{Approximationratio_R}
Let $\mathbb{F}=\mathbb{R}$, $Q\leq M-1$ and $0<\epsilon<1$, we have
\begin{equation*}
v^{\min}_{{\rm QP}}\le
\bar\mu_{\mathbb{R}}\cdot v^{\min}_{{\rm
SDP2}},
\end{equation*}
with
\begin{align}
\bar\mu_{\mathbb{R}} =\max\left\{\frac{27\left [M-Q+\sqrt{\frac{1}{c(\epsilon)}}Q\right]^2}{\pi} ,\frac{12(\sqrt{2M}-1)^2}{(\pi-2)^2c(\epsilon)} \right\},\label{min_ratio_real}
\end{align}
where $c(\epsilon)$ is given by \eqref{eq:ce}.
\end{theorem}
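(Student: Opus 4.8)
The plan is to combine Lemma~\ref{lemma:2.3} with the probabilistic method. Recall that the vector $\bx$ produced by the procedure in Table~\ref{tableRandomize} is feasible for \eqref{Generalproblem} with probability one and obeys $\|\bx^{(2)}\|^2=t^2\|\bfxi\|^2$. Hence it suffices to exhibit a pair $(\alpha,\mu)$, with $\mu=\bar\mu_{\mathbb{R}}$, for which the right-hand side of \eqref{lemmainequality} is strictly positive. Indeed, whenever
\[
Q\,\sigma_1(\alpha)+(M-Q)\,\sigma_2(\alpha)+\tfrac{\alpha}{\mu}<1,
\]
the event $\{t^2\le\alpha,\ \|\bx^{(2)}\|^2\le\mu\,\trace[\bar\bX^{(2)}]\}$ has positive probability, so some feasible $\bx$ satisfies $\|\bx^{(2)}\|^2\le\mu\,\trace[\bar\bX^{(2)}]=\mu\,v^{\min}_{{\rm SDP2}}$; since $v^{\min}_{{\rm QP}}\le\|\bx^{(2)}\|^2$ we obtain $v^{\min}_{{\rm QP}}\le\mu\,v^{\min}_{{\rm SDP2}}$ exactly as in \eqref{defratio}. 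Thus the theorem reduces to the deterministic, one-variable question of selecting $\alpha$ so that the displayed inequality holds with $\mu=\bar\mu_{\mathbb{R}}$.

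The next step is to exploit the $\max$ structure of $\sigma_1,\sigma_2$ in \eqref{sigma1_R}--\eqref{sigma2_R} and split according to which branch is active at the chosen $\alpha$ (equivalently, according to the size of the rank parameter $r$). In the Gaussian-tail regime the square-root branches are active, giving $Q\,\sigma_1(\alpha)+(M-Q)\,\sigma_2(\alpha)=\frac{1}{\sqrt{\alpha}}\big(M-Q+\sqrt{1/c(\epsilon)}\,Q\big)=:K/\sqrt{\alpha}$. Imposing $K/\sqrt{\alpha}+\alpha/\mu<1$ and minimizing $\mu=\alpha/(1-K/\sqrt{\alpha})$ is elementary: with $s=\sqrt{\alpha}$ one minimizes an expression of the form $s^3/(s-K)$, whose minimizer $s\propto K$, together with the explicit Gaussian lower-tail constant inherited from \cite{Luo07approximationbounds}, produces the first term $T_1=\frac{27}{\pi}\big(M-Q+\sqrt{1/c(\epsilon)}\,Q\big)^2$ of \eqref{min_ratio_real} (this specializes, at $Q=M$ and $c(\epsilon)=1$, to the bound $27M^2/\pi$ of \cite{Luo07approximationbounds}). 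In the complementary high-rank regime the linear branches $\tfrac{2(r-1)}{\pi-2}\tfrac{1}{\alpha c(\epsilon)}$ and $\tfrac{2(r-1)}{\pi-2}\tfrac1\alpha$ are active; a short optimization, using that this regime forces $\alpha\,c(\epsilon)$ to be of order $\big(\tfrac{2(r-1)}{\pi-2}\big)^2$ with $\mu$ a constant multiple of $\alpha$, together with the standard SDP rank-reduction bound $r-1\le\sqrt{2M}-1$ (an optimal $\bar\bX^{(2)}$ of \eqref{problemSDR2} may be taken with $\tfrac12 r(r+1)\le M$, whence $r<\sqrt{2M}$), yields the second term $T_2=\frac{12(\sqrt{2M}-1)^2}{(\pi-2)^2 c(\epsilon)}$.

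Finally I would set $\mu=\bar\mu_{\mathbb{R}}=\max\{T_1,T_2\}$ and verify that, whatever the value of $r$, the accompanying choice of $\alpha$ indeed lands in the matching regime and makes $Q\,\sigma_1(\alpha)+(M-Q)\,\sigma_2(\alpha)+\alpha/\mu<1$; taking the $\max$ is precisely what guarantees one admissible $\mu$ covering both regimes. The routine but delicate work is the constant bookkeeping: allocating the unit ``budget'' between the $\sigma$-terms and the Markov term $\alpha/\mu$, and checking the regime-consistency inequalities (such as $\alpha\,c(\epsilon)\gtrless\big(\tfrac{2(r-1)}{\pi-2}\big)^2$) so that each one-variable optimization is legitimate. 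I expect the principal obstacle to be exactly this regime-splitting---cleanly showing that for every admissible $r$ one of the optimized values $T_1,T_2$ is attainable under the single choice $\mu=\max\{T_1,T_2\}$---rather than any individual estimate, since each estimate follows directly from Lemma~\ref{lemma:2.3}, the Markov inequality, and the rank bound.
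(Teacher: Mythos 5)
Your proposal is correct in substance and runs on exactly the same machinery as the paper's proof: Lemma~\ref{lemma:2.3}, the rank-reduction bound $r<\sqrt{2M}$, and the probabilistic-method reduction to finding a pair $(\alpha,\mu)$ with $Q\sigma_1(\alpha)+(M-Q)\sigma_2(\alpha)+\alpha/\mu<1$. The one place you diverge is precisely the step you flag as the ``principal obstacle,'' and it is worth knowing that the paper sidesteps it entirely: instead of splitting into regimes according to which branch of $\sigma_1,\sigma_2$ is active, the paper makes the single choice $\alpha=\max\bigl\{\tfrac{9}{\pi}\bigl[M-Q+\sqrt{1/c(\epsilon)}\,Q\bigr]^2,\ \tfrac{4(\sqrt{2M}-1)^2}{(\pi-2)^2c(\epsilon)}\bigr\}$ and $\mu=3\alpha$. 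The second term of this max forces $\sqrt{1/(\alpha c(\epsilon))}\ge \tfrac{2(\sqrt{2M}-1)}{\pi-2}\cdot\tfrac{1}{\alpha c(\epsilon)}$ (and likewise for $\sigma_2$, since $c(\epsilon)\le 1$), i.e.\ it guarantees that for \emph{every} admissible $r<\sqrt{2M}$ both $\sigma$'s sit in their square-root branch; the first term then gives $Q\sigma_1(\alpha)+(M-Q)\sigma_2(\alpha)\le\sqrt{\pi}/3$, so the probability is at least $1-\sqrt{\pi}/3-1/3>0$ uniformly in $r$. Thus the linear branches are never used: in the paper's reading, the second term of $\bar\mu_{\mathbb{R}}$ is the price of forcing the square-root regime, not a bound derived inside a ``high-rank regime.'' (Your split does go through: writing $K=M-Q+\sqrt{1/c(\epsilon)}\,Q$, in the case $\tfrac{9K^2}{4}<\tfrac{4(r-1)^2}{(\pi-2)^2c(\epsilon)}$ one takes $\alpha$ equal to that right-hand side, where the two branches of $\sigma_1$ coincide and $K/\sqrt{\alpha}<2/3$ holds by the case assumption, and then $\mu=\max\{T_1,T_2\}\ge 3\alpha$ suffices---but this is extra work for the same constant.) One small correction to your sketch: the unconstrained minimization of $\alpha/(1-K/\sqrt{\alpha})$ gives $27K^2/4$ at $\sqrt{\alpha}=3K/2$, where the constraint holds with equality and the probability bound degenerates to $\ge 0$; the factor $\pi$ in $T_1$ is not an extra Gaussian-tail constant (those are already baked into $\sigma_1,\sigma_2$) but comes from the paper's strictly feasible budget split $K/\sqrt{\alpha}=\sqrt{\pi}/3$, $\alpha/\mu=1/3$, chosen so that $Q=M$, $c(\epsilon)=1$ reproduces the $27M^2/\pi$ bound of \cite{Luo07approximationbounds}.
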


\begin{proof}
By applying a suitable rank reduction procedure if necessary, we can
assume that the rank $\bar{r}$ of optimal SDP solution $\bar {\bX}^{(2)}$
satisfies $\bar{r}(\bar{r}+1)/2\leq M$; cf., \cite{Pataki98, So_Ye_Zhang_2008}.
Moreover, this low rank matrix can be constructed in polynomial
time; see \cite{Huang2007}. Thus, $r$ in \eqref{finalinequality}
satisfies  $r\leq \bar{r} <\sqrt{2M}$. We apply the randomization
procedure listed in Table \ref{tableRandomize} to $\bar {\bX}^{(2)}$. From
\eqref{lemmainequality}, we have
\begin{align}
&{\rm Prob}\left( t^2\leq \alpha, \ \|\bx^{(2)}\|^2\leq \mu \trace[{\bar \bX}^{(2)}]\right)\nonumber\\
&\geq 1-Q\cdot\tilde\sigma_1(\alpha)-(M-Q)\cdot \tilde\sigma_2(\alpha)-\frac{\alpha}{\mu},\label{2.11}
\end{align}
where
\begin{equation}
\tilde{\sigma}_1(\alpha)=\max\left\{\sqrt{\frac{1}{\alpha c(\epsilon)}}, \frac{2(\sqrt{2M}-1)}{\pi-2}\cdot \frac{1}{\alpha c(\epsilon)}\right\}
\end{equation}
and
\begin{equation}
\tilde{\sigma}_2(\alpha)=\max\left\{\sqrt{\frac{1}{\alpha}}, \frac{2(\sqrt{2M}-1)}{\pi-2}\cdot \frac{1}{\alpha}\right\}.
\end{equation}
By setting
\begin{equation}
\alpha=\max\left\{\frac{9\left [M-Q+\sqrt{\frac{1}{c(\epsilon)}}Q\right]^2}{\pi} ,\frac{4(\sqrt{2M}-1)^2}{(\pi-2)^2c(\epsilon)} \right\}\label{2.12}
\end{equation}
and $\mu=3\alpha$,  we have
\begin{equation}
Q\cdot\tilde{\sigma}_1(\alpha)+(M-Q)\cdot \tilde{\sigma}_2(\alpha)=Q\cdot \sqrt{\frac{1}{\alpha c(\epsilon)}}+(M-Q)\cdot \sqrt{\frac{1}{\alpha}}
\le \frac{\sqrt{\pi}}{3}.\label{2.13}
\end{equation}
From \eqref{2.11} and \eqref{2.13}, we have that
\begin{equation}
{\rm Prob}\left( t^2\leq \alpha, \ \|\bx^{(2)}\|^2\leq \mu
\trace[{\bar \bX}^{(2)}]\right)\geq 1-\frac{\sqrt{\pi}}{3}-\frac{1}{3}.
\end{equation}
We see from the above inequality that there is a positive
probability (independent of problem size) of at least
$$1-\frac{\sqrt{\pi}}{3}-\frac{1}{3}=0.0758\ldots$$ that
$$\max\left\{\sqrt{\max_{i\in {\cI}}
\bigg\{\frac{1}{\bfxi^H \mbox{\boldmath $H$}_i\bfxi}\bigg \}}, \sqrt{\max_{i\in {\mathcal{M}\backslash{{\cI}}}}
\bigg\{\frac{\epsilon}{\bfxi^H \mbox{\boldmath $H$}_i\bfxi}\bigg \}}\right\}\leq \alpha,$$ with $\alpha$ defined in \eqref{2.12} and $$\min
\|\bfxi^{(2)}\|^2\leq 3\trace[{\bar \bX}^{(2)}].$$ Let $\bfxi$ be any
vector satisfying these two conditions. Then $\bx$ is feasible for
\eqref{Generalproblem}, so that
\begin{align}
v^{\min}_{{\rm QP}}&\le \|\bx^{(2)}\|^2=\|\bfxi\|^2\cdot \max\left\{\sqrt{\max_{i\in {\cI}}
\bigg\{\frac{1}{\bfxi^H \mbox{\boldmath $H$}_i\bfxi}\bigg \}}, \sqrt{\max_{i\in {\mathcal{M}\backslash{{\cI}}}}
\bigg\{\frac{\epsilon}{\bfxi^H \mbox{\boldmath $H$}_i\bfxi}\bigg \}}\right\}\nonumber\\
&\leq \alpha\cdot 3\trace[{\bar \bX}^{(2)}]=\bar\mu_{\mathbb{R}} \cdot  v^{\min}_{{\rm SDP2}},
\end{align}
where the last equality uses $\trace[{\bar \bX}^{(2)}]=v^{\min}_{{\rm
SDP}}$ and $\bar\mu_{\mathbb{R}}$ is defined as in \eqref{min_ratio_real}.
\end{proof}

It is important to note that the constant $\bar\mu_{\mathbb{R}}$ derived above is in the order of $\cO(Q^2(M-Q+1)+M^2)$ in the worst case. Also note that, when $\epsilon=1$ or $M=Q$, Theorem \ref{Approximationratio_R} still holds since the proof is the same. In both cases, we have $\bar\mu_{\mathbb{R}}=\frac{27M^2}{\pi}$, which is exactly the same as the result stated in \cite{Luo07approximationbounds}.
%

We then consider the complex case.
\begin{theorem}\label{thmApproximationStep2complex}
Let $\mathbb{F}=\mathbb{C}$, $Q\leq M-1$ and $0<\epsilon<1$. We have
$v^{\min}_{{\rm QP}}\le \bar\mu_{\mathbb{C}}\cdot v^{\min}_{{\rm SDP2}}$ with
\begin{align}
\bar\mu_{\mathbb{C}} =\max\left\{8\left [M-Q+\frac{1}{c(\epsilon)}Q\right],\frac{24(\sqrt{M}-1)^2}{c(\epsilon)} \right\},\label{min_ratio_com}
\end{align}
where $c(\epsilon)$ is defined by \eqref{eq:ce}.
\end{theorem}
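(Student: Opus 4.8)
The plan is to reproduce, in its structure, the argument used for the real case in Theorem~\ref{Approximationratio_R}, now feeding in the complex branches \eqref{sigma1_C}--\eqref{sigma2_C} of $\sigma_1,\sigma_2$ from Lemma~\ref{lemma:2.3} in place of the real branches \eqref{sigma1_R}--\eqref{sigma2_R}. First I would apply the complex rank-reduction results \cite{Pataki98, So_Ye_Zhang_2008, Huang2007}: the optimal $\bar\bX^{(2)}$ of \eqref{problemSDR2} may be replaced, in polynomial time, by one whose rank $\bar r$ satisfies $\bar r^2\le M$, so that the quantity $r$ appearing in Lemma~\ref{lemma:2.3} obeys $r-1\le\sqrt{M}-1$. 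Substituting $\sqrt{M}-1$ for $r-1$ in \eqref{sigma1_C}--\eqref{sigma2_C} (this only enlarges $\sigma_1,\sigma_2$, so \eqref{lemmainequality} stays valid) reduces the theorem to exhibiting $\alpha>0$ and $\mu\le\bar\mu_{\mathbb{C}}$ with
\[
Q\cdot\sigma_1(\alpha)+(M-Q)\cdot\sigma_2(\alpha)+\frac{\alpha}{\mu}<1 .
\]
Indeed, the event $\{t^2\le\alpha,\ \|\bx^{(2)}\|^2\le\mu\,\trace[\bar\bX^{(2)}]\}$ then has positive probability, and any $\bfxi$ realizing it yields, via Steps~S1--S5, a feasible $\bx$ for \eqref{Generalproblem} obeying $v^{\min}_{{\rm QP}}\le\|\bx^{(2)}\|^2\le\mu\,\trace[\bar\bX^{(2)}]=\mu\,v^{\min}_{{\rm SDP2}}\le\bar\mu_{\mathbb{C}}\,v^{\min}_{{\rm SDP2}}$.

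The crux, and the only real departure from the real case, is that each complex branch in \eqref{sigma1_C}--\eqref{sigma2_C} is the maximum of an $\cO(1/\alpha)$ term and an $\cO\big((\sqrt{M}-1)^2/\alpha^2\big)$ term. In the real case the square-root branch dominates uniformly once $\alpha$ exceeds the single threshold in \eqref{2.12}, collapsing the $\max$ to one clean expression; here the linear branch of $\sigma_1$ (resp. $\sigma_2$) is dominant only when $\alpha c(\epsilon)\ge 12(\sqrt{M}-1)^2$ (resp. $\alpha\ge 12(\sqrt{M}-1)^2$), so a case split is unavoidable. I would split according to the size of $s:=Q+(M-Q)c(\epsilon)$ relative to $(\sqrt{M}-1)^2$, which is precisely what selects between the two arguments of the outer maximum in \eqref{min_ratio_com}. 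If $s\ge\frac{9}{2}(\sqrt{M}-1)^2$, set $\alpha=\frac{8s}{3c(\epsilon)}$ and $\mu=3\alpha=8[M-Q+\frac{1}{c(\epsilon)}Q]$; then $\alpha c(\epsilon)\ge 12(\sqrt{M}-1)^2$ forces the linear branches, the computation parallels \eqref{2.13} and gives $Q\sigma_1(\alpha)+(M-Q)\sigma_2(\alpha)=\frac12$ while $\alpha/\mu=\frac13$. If instead $s<\frac{9}{2}(\sqrt{M}-1)^2$, fix $\alpha=\frac{12(\sqrt{M}-1)^2}{c(\epsilon)}$ (the branch crossover, at which the linear branches are still dominant) and take $\mu=\bar\mu_{\mathbb{C}}$; a direct evaluation gives $Q\sigma_1(\alpha)+(M-Q)\sigma_2(\alpha)=\frac{s}{9(\sqrt{M}-1)^2}$ and, since $\mu\ge\frac{24(\sqrt{M}-1)^2}{c(\epsilon)}$, also $\alpha/\mu\le\frac12$. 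In either case the left side above is at most $\frac56<1$, so the success probability is bounded below by the absolute constant $\frac16$, independent of $M,N,Q$.

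The main obstacle is thus the bookkeeping of this piecewise-maximum structure: one must verify, for the chosen $\alpha$ in each regime, exactly which branch of $\sigma_1$ and of $\sigma_2$ is active (governed by the crossover $\alpha c(\epsilon)=12(\sqrt{M}-1)^2$), and then check that the two regimes glue together into the single bound $\bar\mu_{\mathbb{C}}=\max\{\cdot,\cdot\}$ of \eqref{min_ratio_com}. Care is also needed for small $M$, where $\sqrt{M}-1$ overestimates the true reduced rank $r-1$; since enlarging $\sigma_1,\sigma_2$ only weakens the probability lower bound, the estimates above remain valid, but the crossover choice of $\alpha$ is what prevents the $(\sqrt{M}-1)^{-2}$-type terms from degrading the bound in that range. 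Once the constant probability is secured, the concluding inequality $v^{\min}_{{\rm QP}}\le\bar\mu_{\mathbb{C}}\,v^{\min}_{{\rm SDP2}}$ follows exactly as in the real case.
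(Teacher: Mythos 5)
Your argument is sound and reaches the stated bound, and it shares the paper's overall skeleton: Lemma \ref{lemma:2.3} with the complex branches \eqref{sigma1_C}--\eqref{sigma2_C}, rank reduction to $r\le\sqrt{M}$, then a choice of $\alpha$ and $\mu\le\bar\mu_{\mathbb{C}}$ making $Q\sigma_1(\alpha)+(M-Q)\sigma_2(\alpha)+\alpha/\mu<1$. Where you depart from the paper is the parameter selection, and your assertion that ``a case split is unavoidable'' is not correct. The paper dispenses with any split by putting the maximum \emph{inside} the choice of $\alpha$: it sets $\alpha=\max\left\{4\left[M-Q+\frac{1}{c(\epsilon)}Q\right],\ \frac{12(\sqrt{M}-1)^2}{c(\epsilon)}\right\}$ and $\mu=2\alpha$ (see \eqref{2.22}). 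The second argument of this max forces $\alpha c(\epsilon)\ge 12(\sqrt{M}-1)^2$ and, since $c(\epsilon)\le 1$, also $\alpha\ge 12(\sqrt{M}-1)^2$, so the linear branches of both $\bar\sigma_1$ and $\bar\sigma_2$ are simultaneously active; the first argument then gives $Q\bar\sigma_1(\alpha)+(M-Q)\bar\sigma_2(\alpha)=\frac{4}{3\alpha}\left(\frac{Q}{c(\epsilon)}+M-Q\right)\le\frac{1}{3}$ as in \eqref{2.23}, and with $\alpha/\mu=\frac12$ the success probability is at least $\frac16$ uniformly in $M,Q,\epsilon$, while $\bar\mu_{\mathbb{C}}=2\alpha$ is exactly \eqref{min_ratio_com}. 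By contrast, in your second regime ($s<\frac92(\sqrt{M}-1)^2$) the quantity $\frac{s}{9(\sqrt{M}-1)^2}+\frac{\alpha}{\mu}$ is only \emph{strictly} below $1$, not below $\frac56$: as $s\uparrow\frac92(\sqrt{M}-1)^2$ it tends to $1$, so your claimed uniform constant $\frac16$ fails there. This does not invalidate your proof of the theorem---strict positivity of the probability is all the existence argument requires---but it does weaken the algorithmic content (no problem-size-independent lower bound on the success probability of a single randomization trial), which the paper's single max-based choice of $\alpha$ preserves.
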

\begin{proof}
Following the similar steps in the proof of Theorem
\ref{Approximationratio_R}, in this case we have
\begin{align}
&{\rm Prob}\left( t^2\leq \alpha, \ \|\bx^{(2)}\|^2\leq \mu \trace[{\bar \bX}^{(2)}]\right)\nonumber\\
&\geq 1-Q\cdot\bar\sigma_1(\alpha)-(M-Q)\cdot \bar\sigma_2(\alpha)-\frac{\alpha}{\mu},
\end{align}
where
\begin{equation*}
\bar\sigma_1(\alpha)=\max\left\{\frac{4}{3}\cdot\frac{1}{\alpha\cdot c(\epsilon)}, 16(\sqrt{M}-1)^2\cdot \frac{1}{\alpha^2\cdot c(\epsilon)^2}\right\},
\end{equation*}
and
\begin{equation*}
\bar\sigma_2(\alpha)=\max\left\{\frac{4}{3}\cdot\frac{1}{\alpha}, 16(\sqrt{M}-1)^2\cdot \frac{1}{\alpha^2}\right\}.
\end{equation*}
We choose
\begin{equation}
\alpha=\max\left\{4\left [M-Q+\frac{1}{c(\epsilon)}Q\right], \frac{12(\sqrt{M}-1)^2}{c(\epsilon)} \right\},\quad \mu=2\alpha.\label{2.22}
\end{equation}
By $\alpha\ge \frac{12(\sqrt{M}-1)^2}{c(\epsilon)},$ we can easily verify that
$$\frac{4}{3}\cdot\frac{1}{\alpha\cdot c(\epsilon)}\ge 16(\sqrt{M}-1)^2\cdot \frac{1}{\alpha^2\cdot c(\epsilon)^2},$$
and
$$\frac{4}{3}\cdot\frac{1}{\alpha}\ge 16(\sqrt{M}-1)^2\cdot \frac{1}{\alpha^2}.$$
Then, by  $\alpha\ge 4\left [M-Q+\frac{1}{c(\epsilon)}Q\right],$ we have
\begin{equation}
Q\cdot\bar\sigma_1(\alpha)+(M-Q)\cdot \bar\sigma_2(\alpha)=\frac{4}{3\alpha}\cdot\left( \frac{Q}{c(\epsilon)}+M-Q\right)\le \frac{1}{3}.\label{2.23}
\end{equation}
By \eqref{2.22} and \eqref{2.23}, we have that
\begin{align}
&{\rm Prob}\left( t^2\leq \alpha, \ \|\bx^{(2)}\|^2\leq \mu \trace[{\bar \bX}^{(2)}]\right)\nonumber\\
&\geq 1-\frac{1}{3}-\frac{1}{2}=\frac{1}{6}.
\end{align}
We see from the above inequality that there is a positive probability (independent of problem size) of at least $1/6$
that
$$\max\left\{\sqrt{\max_{i\in {\cI}}
\bigg\{\frac{1}{\bfxi^H \mbox{\boldmath $H$}_i\bfxi}\bigg \}}, \sqrt{\max_{i\in {\mathcal{M}\backslash{{\cI}}}}
\bigg\{\frac{\epsilon}{\bfxi^H \mbox{\boldmath $H$}_i\bfxi}\bigg \}}\right\}\leq \alpha,$$
and $$\min \|\bfxi^{(2)}\|^2\leq 2\trace[{\bar \bX}^{(2)}].$$
Let $\bfxi$ be any vector satisfying these two conditions.
Then $\bx$ is feasible for \eqref{Generalproblem}, so that
\begin{align}
v^{\min}_{{\rm QP}}&\le \|\bx^{(2)}\|^2=\|\bfxi^{(2)}\|^2\cdot \max\left\{\sqrt{\max_{i\in {\cI}}
\bigg\{\frac{1}{\bfxi^H \mbox{\boldmath $H$}_i\bfxi}\bigg \}}, \sqrt{\max_{i\in {\mathcal{M}\backslash{{\cI}}}}
\bigg\{\frac{\epsilon}{\bfxi^H \mbox{\boldmath $H$}_i\bfxi}\bigg \}}\right\}\nonumber\\
&\leq \alpha\cdot 2\trace[{\bar \bX}^{(2)}]=\bar\mu_{\mathbb{C}}\cdot v^{\min}_{{\rm SDP2}},
\end{align}
where the last equality uses $\trace[{\bar \bX}^{(2)}]=v^{\min}_{{\rm SDP2}}$ and $\bar\mu_{\mathbb{C}}$ is defined in \eqref{min_ratio_com}.
\end{proof}

We note here that the constant $\bar\mu_{\mathbb{C}}$ derived is in the order of $\cO(M(M-Q+1))$ in the worst case.

{\bf Remark.} Note that, when $\epsilon=1$ or $M=Q$, Theorem \ref{thmApproximationStep2complex} still holds since the proof is the same. In both cases, we have
\begin{equation}
\bar\mu_{\mathbb{C}}=\max\{8M, 24(\sqrt{M}-1)^2\},\label{newratio_complex}
\end{equation}
  which is not exactly the same as the result $8M$ stated in \cite{Luo07approximationbounds}, although the order is the same. The reason is as follows.
In the proof of  Theorem 2 in \cite{Luo07approximationbounds}, by choosing $\gamma=\frac{1}{4M}$ and using $r\le \sqrt{M}$, a key inequality that
\begin{equation}
\frac{4}{3}\gamma \ge 16 (r-1)^2 \gamma ^2, \quad \forall\; M=1, 2,\cdots \label{inequalityinluo}
\end{equation}
is used to get the final result. It can be verified that the inequality
\begin{align}
M\ge 3(\sqrt{M}-1)^2, \quad \forall\; M=1, 2,\cdots \label{eq:inequalitysufficient}
\end{align}
is needed to guarantee \eqref{inequalityinluo}.
If such inequality is true, then clearly we have: $\max\{8M, 24(\sqrt{M}-1)^2\}=8M$.  However, it can be shown that \eqref{eq:inequalitysufficient} is not true when $M\ge 6$. Thus, the approximation ratio for the complex case when $Q=M$ should be written as: $\max\{8M, 24(\sqrt{M}-1)^2\}$.


\subsubsection{The case for $\epsilon=0$} In this special case, the approximation ratio obtained is in fact a little better than the case with $\epsilon\ne 0$. In the following, we will first state our result, and then provide two ways of proving it.

\begin{theorem}\label{thm_epsilon0}
Assume $Q\leq M-1$ and $\epsilon=0$. We have that
\begin{equation}
v^{\min}_{{\rm QP}}\le \frac{27Q^2(M-Q+1)}{\pi}\cdot v^{\min}_{{\rm SDP2}}, \mbox{ when $\mathbb{F}=\mathbb{R}$},
\end{equation}
and
\begin{align}
v^{\min}_{{\rm QP}}\le \max\{8Q, 24(\sqrt{Q}-1)^2\}\cdot(M-Q+1)\cdot v^{\min}_{{\rm SDP2}}, \mbox{ when $\mathbb{F}=\mathbb{C}$}.
\end{align}
\end{theorem}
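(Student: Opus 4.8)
The plan is to exploit the fact that when $\epsilon=0$ the constraints indexed by $i\notin\cI$ become vacuous in the rounding step. Indeed, in Step S4 of Table \ref{tableRandomize} the requirement for $i\notin\cI$ reads $(\bx^{(2)})^H\bH_i\bx^{(2)}\ge\epsilon=0$, which holds automatically since each $\bH_i\succeq0$. Hence the normalization constant collapses to $t=\sqrt{\max_{i\in\cI}\{1/(\bfxi^H\bH_i\bfxi)\}}$ and depends only on the $Q$ active constraints. Combined with Lemma \ref{lemma:2.2}, which gives $\bar\bfbeta[i]\ge 1/(M-Q+1)$ and therefore $\trace[\bH_i{\bar\bX}^{(2)}]\ge c(0)=1/(M-Q+1)$ for every $i\in\cI$, this reduces the whole problem to a homogeneous QCQP with only $Q$ quadratic constraints, which is precisely what replaces the $M^2$ (resp. $M$) in the general bound by $Q^2$ (resp. $Q$).

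For the first proof I would argue by reduction to the continuous result of \cite{Luo07approximationbounds}. Fix the index set $\cI$ produced by Step S1 and set $\beta_i=1$ for $i\in\cI$, $\beta_i=0$ otherwise. Any $\bx^{(2)}$ feasible for the $Q$-constraint QCQP $\min\|\bw\|^2$ subject to $\bw^H\bH_i\bw\ge1$, $i\in\cI$, then extends to a feasible point of \eqref{Generalproblem} with the same objective, so $v^{\min}_{{\rm QP}}$ is bounded above by the optimal value of this $Q$-constraint QCQP. Its SDP relaxation admits $(M-Q+1){\bar\bX}^{(2)}$ as a feasible point, because $\trace[\bH_i\,(M-Q+1){\bar\bX}^{(2)}]\ge(M-Q+1)c(0)=1$ for all $i\in\cI$; hence the optimal value of that SDP is at most $(M-Q+1)\,\trace[{\bar\bX}^{(2)}]=(M-Q+1)v^{\min}_{{\rm SDP2}}$. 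Invoking the $Q$-constraint bound of \cite{Luo07approximationbounds}, namely $27Q^2/\pi$ in the real case and $\max\{8Q,24(\sqrt{Q}-1)^2\}$ in the complex case, then yields $v^{\min}_{{\rm QP}}\le\frac{27Q^2(M-Q+1)}{\pi}v^{\min}_{{\rm SDP2}}$ and the complex analogue.

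For the second proof I would instead re-run the argument of Lemma \ref{lemma:2.3} verbatim with $\epsilon=0$. Since $t$ now involves only the indices in $\cI$, the term $(M-Q)\cdot\sigma_2(\alpha)$ is absent and the probability estimate becomes ${\rm Prob}(t^2\le\alpha,\ \|\bx^{(2)}\|^2\le\mu\,\trace[{\bar\bX}^{(2)}])\ge 1-Q\,\sigma_1(\alpha)-\alpha/\mu$, with $c(\epsilon)$ in \eqref{eq:ce} replaced by $c(0)=1/(M-Q+1)$. Choosing $\alpha=\frac{9Q^2(M-Q+1)}{\pi}$ and $\mu=3\alpha$ (real), or $\alpha=\max\{4Q,12(\sqrt{Q}-1)^2\}(M-Q+1)$ and $\mu=2\alpha$ (complex), makes the first argument of $\sigma_1$ equal to the very threshold used in Theorems \ref{Approximationratio_R}--\ref{thmApproximationStep2complex}, leaving a positive success probability; as in those proofs, $v^{\min}_{{\rm QP}}\le\mu\,v^{\min}_{{\rm SDP2}}$ then gives the stated ratios.

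The step I expect to be most delicate is controlling the rank-dependent (second) argument of $\sigma_1$. To keep it below the required threshold I must use the sharper rank estimate afforded by having only $Q$ active constraints, namely $r<\sqrt{2Q}$ in the real case and $r\le\sqrt{Q}$ in the complex case, obtained by applying the rank reduction of \cite{Pataki98,So_Ye_Zhang_2008} to the $Q$-constraint subproblem rather than the $\sqrt{2M}$ and $\sqrt{M}$ estimates used for $\epsilon>0$. For the complex case I would also invoke the correction recorded in the Remark following Theorem \ref{thmApproximationStep2complex}: since $Q\ge3(\sqrt{Q}-1)^2$ fails once $Q\ge6$, the complex ratio must be written as $\max\{8Q,24(\sqrt{Q}-1)^2\}$ rather than the plain $8Q$ of \cite{Luo07approximationbounds}.
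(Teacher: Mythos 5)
Your proposal is correct and takes essentially the same approach as the paper — in fact it reproduces both of the paper's proofs, in swapped order: your first argument (fixing $\cI$, observing that the constraints outside $\cI$ are vacuous when $\epsilon=0$, scaling $\bar\bX^{(2)}$ by $M-Q+1$ to obtain a feasible point of the $Q$-constraint SDP, and invoking the bounds of \cite{Luo07approximationbounds}) is precisely the paper's second proof, while your second argument (re-running Lemma \ref{lemma:2.3} with $c(0)=1/(M-Q+1)$, the $(M-Q)\cdot\sigma_2$ term absent, and the choices $\alpha=\frac{9Q^2(M-Q+1)}{\pi}$, $\mu=3\alpha$, resp.\ $\alpha=(M-Q+1)\max\{4Q,12(\sqrt{Q}-1)^2\}$, $\mu=2\alpha$) is precisely the paper's first proof. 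Your handling of the delicate points — the sharper rank bounds $r<\sqrt{2Q}$ (real) and $r\le\sqrt{Q}$ (complex) obtained after fixing $\cI$, and the complex-case correction $\max\{8Q,24(\sqrt{Q}-1)^2\}$ in place of $8Q$ — also matches the paper exactly.
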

{\it Outline of the first proof}: The first proof follows similar steps as the case of $\epsilon\not=0$.
Firstly, for any $\alpha>0$ and
$\mu>0$, similarly to Lemma \ref{lemma:2.3}, we can prove that
\begin{align*}
&{\rm Prob}\left( t^2\leq \alpha, \ \|\bx^{(2)}\|^2\leq \mu \trace[{\bar \bX}^{(2)}]\right)\nonumber\\
&\geq 1-Q\cdot\tilde{\sigma}(\alpha)-\frac{\alpha}{\mu},
\end{align*}
with
\begin{equation}
\tilde{\sigma}(\alpha)=\max\left\{\sqrt{\frac{M-Q+1}{\alpha}}, \frac{2(r-1)}{\pi-2}\cdot \frac{M-Q+1}{\alpha}\right\}, \mbox{when $\mathbb{F}=\mathbb{R}$},
\end{equation}
and
\begin{equation}
\sigma(\alpha)=\max\left\{\frac{4}{3}\cdot\frac{M-Q+1}{\alpha}, 16(r-1)^2\cdot \frac{(M-Q+1)^2}{\alpha^2}\right\}, \mbox{when $\mathbb{F}=\mathbb{C}$},
\end{equation}
where $r:=\min\{\rank ({\bar \bX}^{(2)}), \rank(\bH_i), i\in \cM\}$. Note that, after fixing $\cI$, without loss of generality, we can assume that $r<\sqrt{2Q}$ (resp. $r<\sqrt{Q}$) in the real (resp.\ complex) case by rank reduction procedure. Then, we can derive the approximation bound by setting $\alpha=\frac{9Q^2(M-Q+1)}{\pi}$ (resp.\ $\alpha=(M-Q+1)\cdot\max\{4Q, 12(\sqrt{Q}-1)^2\}$), $\mu=3\alpha$ (resp. $\mu=2\alpha$) in the real (resp. complex) case. $\Box$

{\it{Outline of the second proof\,}}\footnote{We thank the anonymous referee for suggesting this simpler proof for the case when $\epsilon=0$. }: Let $\tilde\bX=(M-Q+1)\bar\bX^{(2)}$. By Lemma \ref{lemma:2.2}, it is clear that
$$\trace(\bH_i\tilde\bX)=(M-Q+1)\trace(\bH_i\bar\bX^{(2)})\ge (M-Q+1)\cdot \beta_i\ge 1,\ \forall\; i\in  {\cI}.$$
Therefore $\tilde{\bX}$ is a feasible solution to (SDP2).

Note that $|\cI|=Q$. After fixing $\cI$, without loss of generality, we assume that $\rank (\tilde\bX) \leq \sqrt{2Q}$ (resp.\ $\rank (\tilde\bX) \leq \sqrt{Q}$) by rank reduction procedure in the real (resp.\ complex) case. By using the rounding method based on $\bar\bX^{(2)}$ (or equivalently $\tilde\bX$) listed in Table \ref{tableRandomize} to get a feasible solution for \eqref{Generalproblem}.
From \cite[Theorems~1~and~2]{Luo07approximationbounds}, one may find $\tilde\bx$ such that
$$\tilde\bx^H\bH_i\tilde\bx\ge 1, \ \forall\; i\in  {\cI}.$$
Moreover, when $\mathbb{F}=\mathbb{R}$, we have
$$\|\tilde\bx\|^2\le \frac{27Q^2}{\pi}\cdot \trace(\tilde\bX)= \frac{27Q^2(M-Q+1)}{\pi}\cdot \trace(\bar\bX^{(2)}),$$
and when $\mathbb{F}=\mathbb{C}$,
$$\|\tilde\bx\|^2\le \max\{8Q, 24(\sqrt{Q}-1)^2\}\cdot\trace(\tilde\bX)= \max\{8Q, 24(\sqrt{Q}-1)^2\}\cdot(M-Q+1)\cdot \trace(\bar\bX^{(2)}).$$
The above inequality is equivalent to \eqref{newratio_complex}. This completes the proof of Theorem~\ref{thm_epsilon0}. $\Box$

However, this proof technique can not be generalized to the case $0<\epsilon<1$. The reason is that after step S1 in Table \ref{tableRandomize}, it is not clear how to appropriately scale the solution of (SDP2) to obtain the ratio stated in Theorem \ref{Approximationratio_R}--\ref{thmApproximationStep2complex}.

We summarize the approximation bounds for the
considered minimization model \eqref{Generalproblem} in Table 2.2. 
\begin{table}[htb]
\begin{center}
\caption{ The Approximation Ratio for the Minimization Model}
\begin{tabular}{|c|c|c|c|}\hline \multicolumn{2}{|c|}{$ $} & &\\
\multicolumn{2}{|c|}{$ Parameters$} & $\mathbb{F}=\mathbb{R}$ & $\mathbb{F}=\mathbb{C}$ \\\hline
 \multicolumn{2}{|c|}{$ $}  & &\\
\multicolumn{2}{|c|}{$ 0<\epsilon<1$} & $\max\left\{\frac{27\left [M-Q+\sqrt{\frac{1}{c(\epsilon)}}Q\right]^2}{\pi} \right.$, & $\max\left\{\frac{24(\sqrt{M}-1)^2}{c(\epsilon)},\qquad\quad\right.$ \\
\multicolumn{2}{|l|}{$ $}  & \quad$\frac{12(\sqrt{2M}-1)^2}{(\pi-2)^2c(\epsilon)} \Bigg\}.$&$\left.8\left [M-Q+\frac{1}{c(\epsilon)}Q\right]\right\}$\\ \multicolumn{2}{|l|}{$ $} & &\\\hline
  & & &\\
&$Q=M$ & $\frac{27M^2}{\pi}$  & $\max\{8M, 24(\sqrt{M}-1)^2\}$  \\ \multirow{4}{*}{$\epsilon=0$}& &  (see \cite{Luo07approximationbounds}) & (see \cite{Luo07approximationbounds} and \eqref{newratio_complex}) \\   & & &\\ \cline{2-4} &&&\\
&$Q\le M-1$ & $\frac{27Q^2(M-Q+1)}{\pi}$ & $\max\left\{8Q, 24(\sqrt{Q}-1)^2\right\}$\\ & & &$\cdot(M-Q+1)$\\  & & &\\\hline
\end{tabular}
\end{center}
\label{ratio_compare}
\end{table}


\section{Approximation bounds for the maximization model}
\subsection{The SDP Relaxation}
By monotonicity, we can assume without loss of generality
that the inequality constraint $\sum_{i\in\mathcal{M}}\beta_i\ge Q$ in
\eqref{1.4} holds with equality, resulting in the following
equivalent formulation:
\begin{align}
v^{\max}_{{\rm QP}}\triangleq\max_{\bw \in \mathbb{F}^{N},\mbox{\footnotesize\boldmath $\beta$}}&\quad\|\bw\|^2\nonumber\\
{\rm s.t.}&\quad \bw^H \bH_i \bw\le\beta_i\cdot \epsilon+(1-\beta_i)\cdot 1,~i\in \mathcal{M}\tag{P2}\label{P2}\\
&\quad \sum_{i\in\mathcal{M}}\beta_i= Q,\ \
  \beta_i\in\{0,1\},~i\in \mathcal{M}\nonumber
\end{align}
Define the global optimal solution for problem \eqref{P2} as ${\bar \bx}$, and its objective value as $v^{\max}_{{\rm QP}}$. Similar to the minimization model considered in Section~2,
by relaxing the binary variables to continuous variables in the interval $[0,1]$, while using SDP relaxation for the continuous variables at the same time, we obtain the SDP relaxation problem for \eqref{P2}:
\begin{align}
v^{\max}_{{\rm SDP}}\triangleq\max&\quad \trace[\bX^{(2)}]\nonumber\\
{\st}&\quad \trace \left[\bH_i{\bX}^{(2)}\right]\le \beta_i\cdot \epsilon+(1-\beta_i)\cdot 1, \ i\in\cM\tag{{\rm SDP3}}\label{problemSDP}\\
&\quad  \sum_{i\in\mathcal{M}}\beta_i= Q,\quad 0\le \beta_i \le 1, \ i\in\cM\nonumber\\
&\quad \bX^{(2)}\succeq 0 \nonumber.
\end{align}
Similar to Lemma \ref{eqivalentlemma}, we can prove that problem \eqref{problemSDP} is equivalent to the SDP relaxation problem for both the discrete and the continuous constraints. Let $(\hat\bfbeta,\bhX^{(2)})$ denote the optimal solution for this problem,  and
$v^{\max}_{{\rm SDP}}$  denote its optimal objective value. Obviously, we have $v^{\max}_{{\rm SDP}}\ge v^{\max}_{{\rm QP}}$.

\subsection{Approximation ratio}\label{Section:3.2}

In this section, we consider using SDP relaxation to
approximately solve the NP-hard problem \eqref{P2}. In particular,
we would like to find a finite constant $0<\hat{\mu}<1$ such that
$v^{\max}_{{\rm QP}}\ge\hat{\mu} v^{\max}_{{\rm SDP}}$. We refer this $\hat{\mu}$ as
 the {\it approximation ratio} for the maximization model. The larger the $\hat{\mu}$, the tighter the SDP relaxation.

First, we ask whether for all values of
$\epsilon\in[0,1)$, SDP relaxation for the problem \eqref{P2}
can provide an approximately optimal solution with an objective value
that is within a constant factor to the maximum value of \eqref{P2}.
Unfortunately the answer to this  question is negative. That is, the
solution obtained by the SDP relaxation can be arbitrarily bad in
terms of approximation ratio, for certain value of $\epsilon$. Below
we show through an example that if $\epsilon=0$, the ratio between
$v^{\max}_{{\rm QP}}$ and $v^{\max}_{{\rm SDP}}$ can be zero.

{\bf Example 3.1}: Consider\begin{align}
\max&\quad\|\bw\|^2\nonumber\\
{\rm s.t.}&\quad \bw^H\bw\le 1-\beta_i,~i=1,2\label{problemExample}\\
&\quad \beta_1+\beta_2= 1\nonumber\\
&\quad \beta_i\in\{0,1\},~i=1,2,\nonumber
\end{align}
where $\bw\in\mathbb{R}^2$. Its SDP relaxation is
\begin{align}
\max&\quad\trace[\bX^{(2)}]\nonumber\\
{\rm s.t.}&\quad \trace[\bX^{(2)}]\le 1-\beta_i,~i=1,2\label{problemExampleSDP}\\
&\quad \beta_1+\beta_2=1, \ 0\le \beta_i\le 1,~i=1,2\nonumber\\
&\quad \bX^{(2)}\succeq 0 \nonumber.
\end{align}
For problem \eqref{problemExample}, $\bar{\bw}^*=[0,0]^T$ is its optimal
solution, and we have $v^{\max}_{{\rm QP}}=0$. On the other hand, it
can be easily checked that
$$\tilde{\bX}^{(2)}=\left[ \begin{array}{cc}
\frac{1}{4} & 0\\
0 & \frac{1}{4}
\end{array}\right]$$
is a feasible solution for problem \eqref{problemExampleSDP} with $\beta_1=\beta_2=\frac{1}{2}$, so
$v^{\max}_{{\rm SDP}}\geq \trace[\tilde{\bX}^{(2)}]=\frac{1}{2}$.
Therefore, for this example the ratio $v^{\max}_{{\rm
QP}}/v^{\max}_{{\rm SDP}}$ is zero. \hfill $\square$

\begin{table}[htb]
\begin{center}
\caption{The Randomization Procedure for the Maximization Model \eqref{1.4}}
\label{randomprocedure_2} {
\begin{tabular}{|l|}
\hline\\
S0: Obtain the optimal solution $(\hat\bfbeta, \bhX^{(2)})$ of \eqref{problemSDP}, with $\hat\bfbeta\in\mathbb{R}^{M\times 1}$\\
\qquad and ${\bhX}^{(2)}\in\mathbb{S}^{N\times N}_{+}$.\\

S1: Define an index set ${\chI}\triangleq\left\{i:\hat\bfbeta[i]\geq \hat\bfbeta_{[Q]},~i\in \mathcal{M}\right\}$.\\

S2: Set $\bhx^{(1)}[i]=1$ for all $i\in {\chI}$;\ Set $\bhx^{(1)}[i]=0$ for all $i\in {\mathcal{M}\setminus {{\chI}}}$. \\

S3: Generate a random vector $\widehat{\bfxi}\in \mathbb{F}^N$ from the normal\\ \qquad distribution $\cN_\mathbb{F}(\bf{0}, \bhX^{(2)})$. \\

S4: Let $\bhx^{(2)}=\hat{t}\widehat{\bfxi}$, where\\
\qquad $\hat{t}=\min\left\{\sqrt{\min_{i\in {\chI}}
\bigg\{\frac{\epsilon}{\widehat{\bfxi}^H\mbox{\boldmath $H$}_i\widehat{\bfxi}}\bigg\}}, \sqrt{\min_{i\in {\mathcal{M}\backslash{{\chI}}}}\bigg\{\frac{1}{\hat{\bfxi}^H\mbox{\boldmath $H$}_i\hat{\bfxi}}\bigg\}}\right\}$.\\

S5: Let $\bhx=(\bhx^{(1)T}, \bhx^{(2)T})^T$.\\
\\ \hline
\end{tabular}}
\end{center}
\end{table}

In light of the above example, we will focus on the
case $\epsilon\in(0,1)$ and analyze the
SDP approximation ratio for problem \eqref{P2} in this case. To this end, we first propose a
randomization procedure to convert $(\hat\bfbeta, \bhX^{(2)})$ to a feasible solution of
the original problem \eqref{P2}. The detailed steps are listed in
Table \ref{randomprocedure_2}.

By the same argument as in Section \ref{SecNewRand}, we can verify that
$\bhx^{(1)}$ generated from S1-S2 of Table \ref{randomprocedure_2}
satisfies the constraints $\sum_{i\in\mathcal{M}}\bhx[i]= Q$ and $\bhx[i]\in\{0,1\}$
for all $i=1,\cdots, M$. To argue that $\bhx^{(2)}$ is also
feasible, it remains to show that $(\bhx^{(2)})^H \bH_i \bhx^{(2)}\le\bhx[i]\cdot\epsilon + (1-\bhx[i]) \cdot 1 $ for all $i\in\cM$. To see this, note that
for all $i\in\chI$, we use $\bhx^{(1)}[i]=1$ to obtain
\begin{align}
(\bhx^{(2)})^H\bH_i\bhx^{(2)}=\hat{t}^2\widehat{\bfxi}^H\bH_i\widehat{\bfxi}
\le \frac{\epsilon}{\hat{\bfxi}^H\bH_i\hat{\bfxi}}\hat{\bfxi}^H\bH_i\hat{\bfxi}
=\epsilon.\label{3.4}
\end{align}
Similarly, for all $i\in {\mathcal{M}\backslash{{\chI}}}$,  we have $\bhx^{(1)}[i]=0$ and thus
\begin{align}
(\bhx^{(2)})^H\bH_i\bhx^{(2)}\leq\frac{1}{\widehat{\bfxi}^H\bH_i\widehat{\bfxi}}\widehat{\bfxi}^H\bH_i\widehat{\bfxi}=
1.\label{3.5}
\end{align}
In summary, the solution $\bhx$ generated from the proposed
randomization procedure is feasible for problem \eqref{P2}. Moreover, by the same argument as in Lemma \ref{lemma:2.2}, we have the following lemma.

 \begin{lemma}\label{lemma:3.0}
After Step S2 in the randomization procedure listed in Table
\ref{randomprocedure_2}, for all $ i\in \chI$, we have ${\hat \bfbeta}[i]\ge \frac{1}{M-Q+1}.$
\end{lemma}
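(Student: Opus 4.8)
The plan is to mimic exactly the argument used in Lemma~\ref{lemma:2.2}, since the combinatorial structure of Step~S1--S2 in Table~\ref{randomprocedure_2} is identical to that in Table~\ref{tableRandomize}: in both cases we pick the index set consisting of the $Q$ largest components of the relevant relaxed binary vector. The only difference is that here the vector is $\hat\bfbeta$, the optimal binary relaxation from \eqref{problemSDP}, rather than $\bar\bfbeta$ from \eqref{problemSDR2}. I would therefore first establish the analogue of Lemma~\ref{lemmaXBound} for $\hat\bfbeta$, and then deduce the stated bound on the elements indexed by $\chI$.

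The key observation making the whole argument go through is that the feasibility constraints on $\hat\bfbeta$ are structurally the same as those on $\bar\bfbeta$: namely $\sum_{i\in\cM}\hat\bfbeta[i]=Q$ together with $0\le\hat\bfbeta[i]\le 1$ for all $i$. These are precisely the two ingredients used in the proof of Lemma~\ref{lemmaXBound}. Concretely, I would define $\mathcal{R}(\delta)\triangleq\{i:\hat\bfbeta[i]\ge\delta\}$ and argue by contradiction. If $|\mathcal{R}(\delta)|\le Q-1$, then at least $M-Q+1$ indices have $\hat\bfbeta[i]<\delta$, so
\begin{align*}
\sum_{i=1}^{M}\hat\bfbeta[i]<(M-Q+1)\delta+(Q-1)\le (M-Q+1)\cdot\frac{1}{M-Q+1}+(Q-1)=Q,
\end{align*}
contradicting $\sum_{i=1}^{M}\hat\bfbeta[i]=Q$. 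Hence $|\mathcal{R}(\delta)|\ge Q$ for every $\delta\in[0,\tfrac{1}{M-Q+1}]$.

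Finally, setting $\delta=\tfrac{1}{M-Q+1}$ yields at least $Q$ indices $i$ with $\hat\bfbeta[i]\ge\tfrac{1}{M-Q+1}$. Since $\chI$ is defined in Step~S1 to collect exactly the $Q$ largest components of $\hat\bfbeta$, every element of $\chI$ must be at least as large as the $Q$-th largest value, and in particular at least $\tfrac{1}{M-Q+1}$. This gives the claim $\hat\bfbeta[i]\ge\tfrac{1}{M-Q+1}$ for all $i\in\chI$.

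I do not anticipate any genuine obstacle here; the proof is a direct transcription of the minimization-model argument, and the paper itself signals this by writing ``by the same argument as in Lemma~\ref{lemma:2.2}.'' The only point requiring a moment's care is confirming that the relaxed maximization constraints indeed force $\sum_i\hat\bfbeta[i]=Q$ and $0\le\hat\bfbeta[i]\le 1$ --- but these are explicit in the formulation of \eqref{problemSDP}, so the reduction to Lemma~\ref{lemmaXBound}'s counting argument is immediate. Thus the cleanest presentation is simply to state that the proof is identical to those of Lemma~\ref{lemmaXBound} and Lemma~\ref{lemma:2.2} with $\bar\bfbeta$ replaced by $\hat\bfbeta$.
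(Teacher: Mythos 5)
Your proposal is correct and follows essentially the same route as the paper: the paper gives no separate proof, stating only that Lemma~\ref{lemma:3.0} holds ``by the same argument as in Lemma~\ref{lemma:2.2},'' and your explicit transcription of the counting argument from Lemma~\ref{lemmaXBound} — using the constraints $\sum_{i\in\cM}\hat\bfbeta[i]=Q$ and $0\le\hat\bfbeta[i]\le 1$ from \eqref{problemSDP} — is exactly that argument.
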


We are now ready to prove the approximation bounds of the SDP for
problem \eqref{P2}, with $\epsilon\in(0,1)$.
First note that in the special case when $Q=M$, the problem \eqref{P2}
reduces to the following continuous QCQP
\begin{align}
\max_{\bw}&\quad\|\bw\|^2\nonumber\\
{\rm s.t.}&\quad {\bw^H{\bH_i}\bw}\le1,~i\in\cM\nonumber.
\end{align}
For this problem, it has been shown in \cite{Nemirovski99} that the
ratio between the optimal value of the original problem and its corresponding SDP relaxation
is bounded below by $\cO(1/\log M)$. Thus, we
only consider the case $Q\leq M-1$ in the following analysis. 

\begin{lemma}
Let $Q\leq M-1$ and $0<\epsilon\leq 1$. Suppose $\bhx$ is generated
by the randomization procedure given in Table
\ref{randomprocedure_2}. Then with probability $1$, $\bhx$ is well
defined and feasible for \eqref{1.4}. Moreover, for any $\gamma>0$
and $\hat{\mu}>0$,
\begin{align}
&{\rm Prob}\left( \hat{t}^2\geq \gamma, \ \|\bhx^{(2)}\|^2\geq \hat{\mu} {\rm Tr\,}[\bhX^{(2)}]\right)\nonumber\\
&\geq 1-\hat{\sigma}(\hat{\mu}, \hat{\gamma}),
\end{align}
with
\begin{equation}
\hat{\sigma}(\hat{\mu}, \hat{\gamma}) =\sum_{i\in \chI}\hat{r}_i\cdot e^{-\frac{1}{2\gamma}\cdot\frac{\epsilon}{\tilde{c}(\epsilon)}}+\sum_{i\in {\mathcal{M}\backslash{{\chI}}}}\hat{r}_i\cdot e^{-\frac{1}{2\gamma}}+\frac{0.968}{(1-\hat{\mu}/\gamma)}, \mbox{ if $\mathbb{F}=\mathbb{R}$},\label{lemmainequality3}
\end{equation}
and
\begin{equation}
\hat{\sigma}(\hat{\mu}, \hat{\gamma}) =\sum_{i\in \chI}\hat{r}_i\cdot e^{-\frac{1}{\gamma}\cdot\frac{\epsilon}{\tilde{c}(\epsilon) }}+\sum_{i\in {\mathcal{M}\backslash{{\chI}}}}\hat{r}_i\cdot e^{-\frac{1}{\gamma}}+\frac{2}{e(1-\hat{\mu}/\gamma)}, \mbox{ if $\mathbb{F}=\mathbb{C}$},\label{lemmainequality2}
\end{equation}
where $\hat{r}_i:=\min\{\rank (\bH_i), \rank
(\bhX^{(2)})\}$, $i\in \mathcal{M}$ and
\begin{equation}\label{eq:tildece}
\tilde{c}(\epsilon):= 1-\frac{1-\epsilon}{M-Q+1}.
\end{equation}.\label{lemma:3.1}
\end{lemma}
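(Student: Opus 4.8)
The plan is to mirror the proof of Lemma~\ref{lemma:2.3}, but with every tail direction reversed, since here I must certify that $\hat t^2$ and $\|\bhx^{(2)}\|^2$ are simultaneously \emph{large}. Well-definedness and feasibility have already been secured in Section~\ref{Section:3.2}: because the density of $\widehat\bfxi^H\bH_i\widehat\bfxi$ is continuous, the event $\widehat\bfxi^H\bH_i\widehat\bfxi=0$ has probability zero, so $\hat t$ in Step~S4 is defined almost surely, and \eqref{3.4}--\eqref{3.5} already show $\bhx$ is feasible for \eqref{1.4}. The remaining work is the probability estimate. First I would observe that on the event $\hat t^2\ge\gamma$ we have $\|\bhx^{(2)}\|^2=\hat t^2\|\widehat\bfxi\|^2\ge\gamma\|\widehat\bfxi\|^2$, so it suffices to control $\|\widehat\bfxi\|^2$ from below:
\begin{align*}
&{\rm Prob}\!\left(\hat t^2\ge\gamma,\ \|\bhx^{(2)}\|^2\ge\hat\mu\,\trace[\bhX^{(2)}]\right)\\
&\quad\ge {\rm Prob}\!\left(\hat t^2\ge\gamma,\ \|\widehat\bfxi\|^2\ge\tfrac{\hat\mu}{\gamma}\trace[\bhX^{(2)}]\right).
\end{align*}
Expanding $\hat t^2\ge\gamma$ through the definition of $\hat t$ turns the right-hand side into the joint event $\{\widehat\bfxi^H\bH_i\widehat\bfxi\le\epsilon/\gamma,\ i\in\chI\}$, $\{\widehat\bfxi^H\bH_i\widehat\bfxi\le 1/\gamma,\ i\notin\chI\}$, together with $\{\|\widehat\bfxi\|^2\ge\tfrac{\hat\mu}{\gamma}\trace[\bhX^{(2)}]\}$.

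Next I would supply upper bounds on the means $\trace[\bH_i\bhX^{(2)}]$. From the feasibility of $(\hat\bfbeta,\bhX^{(2)})$ in \eqref{problemSDP} we have $\trace[\bH_i\bhX^{(2)}]\le 1-(1-\epsilon)\hat\bfbeta[i]$; combined with Lemma~\ref{lemma:3.0}, which gives $\hat\bfbeta[i]\ge\frac{1}{M-Q+1}$ for $i\in\chI$, this yields $\trace[\bH_i\bhX^{(2)}]\le 1-\frac{1-\epsilon}{M-Q+1}=\tilde c(\epsilon)$ for $i\in\chI$, while $\hat\bfbeta[i]\ge 0$ gives $\trace[\bH_i\bhX^{(2)}]\le 1$ for $i\notin\chI$. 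Applying the union bound to the complement, I would then bound the target probability from below by $1$ minus the three contributions $\sum_{i\in\chI}{\rm Prob}(\widehat\bfxi^H\bH_i\widehat\bfxi>\epsilon/\gamma)$, $\sum_{i\notin\chI}{\rm Prob}(\widehat\bfxi^H\bH_i\widehat\bfxi>1/\gamma)$, and ${\rm Prob}(\|\widehat\bfxi\|^2<\tfrac{\hat\mu}{\gamma}\trace[\bhX^{(2)}])$.

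The two constraint sums are controlled by exponential (Chernoff-type) upper-tail bounds for Gaussian quadratic forms, the max-model counterparts of the estimates used in \cite{Luo07approximationbounds}. Writing each threshold as a multiple of its mean, e.g.\ $\epsilon/\gamma=s\,\trace[\bH_i\bhX^{(2)}]$ with $s=\frac{\epsilon}{\gamma\trace[\bH_i\bhX^{(2)}]}\ge\frac{\epsilon}{\gamma\tilde c(\epsilon)}$, the mean bound above gives precisely a lower bound on the normalized level $s$. A tail bound of the form ${\rm Prob}(\widehat\bfxi^H\bH_i\widehat\bfxi>s\,\trace[\bH_i\bhX^{(2)}])\le\hat r_i\,e^{-s/2}$ in the real case then produces the factor $e^{-\frac{1}{2\gamma}\cdot\frac{\epsilon}{\tilde c(\epsilon)}}$ for $i\in\chI$ (and $e^{-\frac{1}{2\gamma}}$ for $i\notin\chI$), while the complex analogue $\hat r_i\,e^{-s}$ replaces $s/2$ by $s$ and yields $e^{-\frac{1}{\gamma}\cdot\frac{\epsilon}{\tilde c(\epsilon)}}$ and $e^{-\frac{1}{\gamma}}$, matching \eqref{lemmainequality3}--\eqref{lemmainequality2}.

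I expect the norm term to be the main obstacle. Unlike the minimization case, where the \emph{upper} tail of $\|\widehat\bfxi\|^2$ was dispatched by a one-line Markov inequality, here I need a \emph{lower}-tail estimate for $\|\widehat\bfxi\|^2$, which Markov cannot provide. I would instead invoke a dedicated small-value bound for the squared Euclidean norm of a Gaussian vector about its mean $\trace[\bhX^{(2)}]$; bounding $\mathbb{E}[(1-\|\widehat\bfxi\|^2/\trace[\bhX^{(2)}])^+]$ uniformly over the eigenvalue distribution and dividing by $1-\hat\mu/\gamma$ yields the universal constants $0.968/(1-\hat\mu/\gamma)$ in the real case and $2/\big(e(1-\hat\mu/\gamma)\big)$ in the complex case. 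Combining the two quadratic-form sums with this norm term through the union bound reproduces exactly $\hat\sigma(\hat\mu,\hat\gamma)$ as defined in \eqref{lemmainequality3} and \eqref{lemmainequality2}, completing the estimate for both fields.
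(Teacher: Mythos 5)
Your proposal is correct and follows essentially the same route as the paper's own proof: the same initial reduction to the event $\{\hat t^2\ge\gamma,\ \|\widehat{\bfxi}\|^2\ge(\hat\mu/\gamma)\trace[\bhX^{(2)}]\}$, the same mean bounds $\trace[\bH_i\bhX^{(2)}]\le\tilde c(\epsilon)$ for $i\in\chI$ (via Lemma \ref{lemma:3.0}) and $\le 1$ otherwise, the same union bound, and the same exponential upper-tail estimates for the Gaussian quadratic forms. The paper handles the lower-tail norm term exactly as you propose, by deferring to the Markov-inequality/Weibull-distribution argument in the proof of Theorem 5 of \cite{Luo07approximationbounds}, which is the $\bbE[(1-\|\widehat{\bfxi}\|^2/\trace[\bhX^{(2)}])^+]/(1-\hat\mu/\gamma)$ device you describe, yielding the constants $0.968$ and $2/e$.
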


\begin{proof}
Since the density of
$\widehat{\bfxi}^H\bH_i\widehat{\bfxi}$ is continuous, thus the
probability $\widehat{\bfxi}^H\bH_i\widehat{\bfxi}=0$ is zero, which
implies that $\hat{t}$ computed by Step S4 in Table
\ref{randomprocedure_2} is well defined. The feasibility of $\bhx$
generated by the randomization procedure has been shown in Section \ref{Section:3.2}.
For any $\gamma>0$ and $\hat{\mu}>0$, we have
\begin{align}
&{\rm Prob}\left( \hat{t}^2\geq \gamma, \ \|\bhx^{(2)}\|^2\geq \hat{\mu} \trace[\bhX^{(2)}]\right)\nonumber\\
\geq &\mbox{Prob}\left( \hat{t}^2\geq \gamma, \ \|\hat{\bfxi}\|^2\geq \frac{\hat{\mu}}{\gamma} \trace[\bhX^{(2)}]\right)\nonumber\\
=&\mbox{Prob}\left(
\frac{\epsilon}{\hat{\bfxi}^H\bH_i\hat{\bfxi}}\geq \gamma, \forall
i\in \chI;\ \frac{1}{\widehat{\bfxi}^H\bH_i\widehat{\bfxi}}\geq
\gamma, \forall\; i\in {\mathcal{M}\setminus \chI};\
 \|\widehat{\bfxi}\|^2\geq \frac{\hat{\mu}}{\gamma} \trace[\bhX^{(2)}]\right)\nonumber\\
=&\mbox{Prob}\left( \widehat{\bfxi}^H\bH_i\widehat{\bfxi}\leq
\frac{\epsilon}{\gamma}, \forall\; i\in \chI;\
\widehat{\bfxi}^H\bH_i\widehat{\bfxi}\leq \frac{1}{\gamma}, \forall
i\in {\mathcal{M}\setminus \chI};\  \|\widehat{\bfxi}\|^2\geq
\frac{\hat{\mu}}{\gamma} \trace[\bhX^{(2)}]\right).\label{3.7}
\end{align}
By Lemma \ref{lemma:3.0}, we have
\begin{eqnarray}
\trace \left[\bH_i\bhX^{(2)}\right]&\leq& \hat\bfbeta[i]\cdot \epsilon+ (1-\hat\bfbeta[i])\cdot 1\nonumber\\
&=& 1+(\epsilon-1)\cdot \hat\bfbeta[i]\nonumber\\
&\le&1-\frac{1-\epsilon}{M-Q+1}:= \tilde{c}(\epsilon),\quad
\forall\; i\in \chI,\label{eqa:3.8}
\end{eqnarray}
where the first inequality follows from the feasibility of  ${(\hat\bfbeta, \bhX^{(2)})}$. On the other hand, we have
\begin{align}
\trace \left[\bH_i\bhX^{(2)}\right]&\leq \hat\bfbeta[i]\cdot \epsilon+ (1-\hat\bfbeta[i])\cdot 1\le 1, \quad  \forall\; i\in {\mathcal{M}\backslash{{\chI}}}.\label{eqa:3.9}
\end{align}

Note that
$\mathbb{E}\left(\widehat{\bfxi}\widehat{\bfxi}^{T}\right)=\bhX^{(2)}$
and
$\mathbb{E}\left(\widehat{\bfxi}^H\bH_i\widehat{\bfxi}\right)=\trace
[\bH_i\bhX^{(2)}]$. By combining \eqref{3.7}, \eqref{eqa:3.8} and \eqref{eqa:3.9}, we
have
\begin{align}
&{\rm Prob}\left( \hat{t}^2\geq \gamma, \ \|\bhx^{(2)}\|^2\geq \hat{\mu} \trace[\bhX^{(2)}]\right)\nonumber\\
\geq &\mbox{Prob}\left( \widehat{\bfxi}^H\bH_i\widehat{\bfxi}\leq \frac{1}{\gamma}\cdot\frac{\epsilon}{\tilde{c}(\epsilon) }\cdot\trace[\bH_i\bhX^{(2)}],\
\forall\; i\in \chI \right.;\nonumber\\
&\quad\quad\ \ \left.\widehat{\bfxi}^H\bH_i\widehat{\bfxi}\leq \frac{1}{\gamma}\trace[\bH_i\bhX^{(2)}],\
\forall\; i\in {\mathcal{M}\setminus \chI};\
\|\widehat{\bfxi}\|^2\geq \frac{\hat{\mu}}{\gamma} \trace[\bhX^{(2)}]\right)\nonumber\\
\geq & 1- \sum_{i\in
\chI}\mbox{Prob}\left(\widehat{\bfxi}^H\bH_i\widehat{\bfxi}>
\frac{1}{\gamma}\cdot\frac{\epsilon}{\tilde{c}(\epsilon) }\cdot\trace[\bH_i\bhX^{(2)}]\right)\nonumber\\
&\quad -\sum_{i\in
\mathcal{M}\setminus \chI}\mbox{Prob}
\left(\widehat{\bfxi}^H\bH_i\widehat{\bfxi}> \frac{1}{\gamma}\trace[\bH_i\bhX^{(2)}]\right)
-\mbox{Prob}\left(\|\widehat{\bfxi}\|^2< \frac{\hat{\mu}}{\gamma} \trace[\bhX^{(2)}]\right).
\end{align}
When $\mathbb{F}=\mathbb{C}$, we have
\begin{align}
&{\rm Prob}\left( \hat{t}^2\geq \gamma, \ \|\bhx^{(2)}\|^2\geq \hat{\mu} \trace[\bhX^{(2)}]\right)\nonumber\\
&\geq  1-\sum_{i\in \chI}\hat{r}_i \cdot e^{-\frac{1}{\gamma}\cdot\frac{\epsilon}{\tilde{c}(\epsilon) }}-\sum_{i\in {\mathcal{M}\backslash{{\chI}}}}\hat{r}_i\cdot e^{-\frac{1}{\gamma}}-\mbox{Prob}\left(\|\hat{\bfxi}\|^2< \frac{\hat{\mu}}{\gamma} \trace[\bhX^{(2)}]\right)\label{3.9}\\
&\geq  1-\sum_{i\in \chI}\hat{r}_i\cdot e^{-\frac{1}{\gamma}\cdot\frac{\epsilon}{\tilde{c}(\epsilon)}}-\sum_{i\in {\mathcal{M}\backslash{{\chI}}}}\hat{r}_i\cdot e^{-\frac{1}{\gamma}}-\frac{2}{e(1-\hat{\mu}/\gamma)},\label{3.10}
\end{align}
where \eqref{3.9} is from the Lemma 5 in
\cite{Luo07approximationbounds}, and the last inequality \eqref{3.10} can be
verified by using Markov inequality and the property of Weibull
distribution (the details can be found in the proof of \cite[Theorem
5]{Luo07approximationbounds}). When $\mathbb{F}=\mathbb{R}$, the conclusion can be similarly proved by utilizing the results stated in \cite[Page 22]{Luo07approximationbounds}.
\end{proof}

\begin{theorem}\label{max_real}
When $\mathbb{F}=\mathbb{R}$, for real QCQP problem \eqref{1.4} and its SDP relaxation \eqref{problemSDP}, we have
$$v^{\max}_{{\rm QP}}\geq \frac{\epsilon}{\tilde{c}(\epsilon)} \cdot\frac{1}{200\ln (50K)}v^{\max}_{{\rm SDP}},$$
where $K=\sum_{i=1}^{M} \min \{\rank (\bH_i), \sqrt{2M}\}$ and $\tilde{c}(\epsilon)$ is given by \eqref{eq:tildece}.
\end{theorem}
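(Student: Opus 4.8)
The plan is to convert the probabilistic estimate of Lemma~\ref{lemma:3.1} into a deterministic existence statement by calibrating the two free parameters $\gamma$ and $\hat\mu$ so that the lower bound $1-\hat\sigma(\hat\mu,\hat\gamma)$ in the real case \eqref{lemmainequality3} becomes strictly positive. Before doing so, I would apply a rank reduction to the optimal SDP solution $\bhX^{(2)}$: for the fixed index partition produced by Step~S1, the matrix variable in \eqref{problemSDP} is constrained only by $M$ linear matrix inequalities, so by the Pataki / So--Ye--Zhang argument used elsewhere in the paper we may assume $\rank(\bhX^{(2)})(\rank(\bhX^{(2)})+1)/2\le M$, hence $\rank(\bhX^{(2)})\le\sqrt{2M}$. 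Consequently $\hat r_i=\min\{\rank(\bH_i),\rank(\bhX^{(2)})\}\le\min\{\rank(\bH_i),\sqrt{2M}\}$, and summing over $i$ yields $\sum_{i=1}^M\hat r_i\le K$, which is precisely what lets $K$ enter the final bound.

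Next I would record the elementary monotonicity $\epsilon\le\tilde c(\epsilon)<1$, which follows directly from \eqref{eq:tildece} because $M-Q+1\ge 2$; in particular $\tfrac{\epsilon}{\tilde c(\epsilon)}\le 1$. This guarantees that the exponent $-\tfrac{1}{2\gamma}$ appearing in the second sum of \eqref{lemmainequality3} is at least as negative as the exponent $-\tfrac{1}{2\gamma}\cdot\tfrac{\epsilon}{\tilde c(\epsilon)}$ in the first sum, so the first (larger) exponential dominates and both tail sums can be absorbed into the single quantity
$$
\Bigl(\sum_{i=1}^{M}\hat r_i\Bigr)\,e^{-\frac{1}{2\gamma}\cdot\frac{\epsilon}{\tilde c(\epsilon)}}\;\le\;K\,e^{-\frac{1}{2\gamma}\cdot\frac{\epsilon}{\tilde c(\epsilon)}}.
$$

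The core of the argument is then the parameter choice
$$
\gamma=\frac{\epsilon}{\tilde c(\epsilon)}\cdot\frac{1}{2\ln(50K)},\qquad
\hat\mu=\frac{\epsilon}{\tilde c(\epsilon)}\cdot\frac{1}{200\ln(50K)},
$$
so that $\tfrac{1}{2\gamma}\cdot\tfrac{\epsilon}{\tilde c(\epsilon)}=\ln(50K)$ and the dominating tail term collapses to $K\cdot(50K)^{-1}=\tfrac{1}{50}$, while $\hat\mu/\gamma=\tfrac{1}{100}$ makes the anti-concentration term equal to $\tfrac{0.968}{1-1/100}=\tfrac{0.968}{0.99}<0.978$. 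Adding these gives $\hat\sigma(\hat\mu,\hat\gamma)\le\tfrac{1}{50}+0.978<1$, so the joint event in Lemma~\ref{lemma:3.1} has strictly positive probability. Hence there exists a realization $\bhx$ that is feasible for \eqref{1.4} and satisfies $\|\bhx^{(2)}\|^2\ge\hat\mu\,\trace[\bhX^{(2)}]$. Since $\trace[\bhX^{(2)}]=v^{\max}_{{\rm SDP}}$ and feasibility forces $v^{\max}_{{\rm QP}}\ge\|\bhx^{(2)}\|^2$, I conclude $v^{\max}_{{\rm QP}}\ge\hat\mu\,v^{\max}_{{\rm SDP}}=\tfrac{\epsilon}{\tilde c(\epsilon)}\cdot\tfrac{1}{200\ln(50K)}\,v^{\max}_{{\rm SDP}}$, as claimed.

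I expect the main obstacle to be the tightness of this calibration rather than any deep estimate, since the hard analytic work is already packaged in Lemma~\ref{lemma:3.1}. The two contributions $\tfrac{1}{50}$ and $\tfrac{0.968}{0.99}$ are each close to the unit budget, and their sum must fall below $1$ by only a narrow margin; this is exactly why the argument of the logarithm is forced to be $50K$ (not $K$) and the leading constant is forced to be $200$ (not a smaller multiple), so that the exponential tails and the Weibull-type anti-concentration term can be suppressed simultaneously. A secondary point requiring care is to confirm that it is genuinely the real-field expression \eqref{lemmainequality3}, with its constant $0.968$ coming from the real Gaussian tail bound, that governs this computation, so that no complex-case estimate is inadvertently substituted.
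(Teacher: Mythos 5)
Your proof is correct and takes essentially the same route as the paper's: rank-reduce $\bhX^{(2)}$ so that $\sum_{i}\hat r_i\le K$, absorb both tail sums into $K\,e^{-\frac{1}{2\gamma}\cdot\frac{\epsilon}{\tilde c(\epsilon)}}$ using $\epsilon/\tilde c(\epsilon)\le 1$, and pick $\gamma=\frac{\epsilon}{\tilde c(\epsilon)}\cdot\frac{1}{2\ln(50K)}$, $\hat\mu=0.01\gamma$ so the failure probability is at most $\frac{1}{50}+\frac{0.968}{0.99}<1$. Your calibration, which keeps the exponent $\frac{1}{2\gamma}$ from \eqref{lemmainequality3}, is in fact the internally consistent version of the paper's displays \eqref{3.13}--\eqref{3.14} (which drop the factor $2$ in the exponent); the arithmetic and the resulting constant $\frac{1}{200\ln(50K)}$ are identical.
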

\begin{proof}
Note that $0<\frac{\epsilon}{\tilde{c}(\epsilon)}\le 1$, by Lemma
\ref{lemma:3.1}, we have
\begin{align}
&{\rm Prob}\left( \hat{t}^2\geq \gamma, \ \|\bhx^{(2)}\|^2\geq \hat{\mu} \trace[\bhX^{(2)}]\right)\nonumber\\
&\ge 1-\sum_{i\in \chI}\hat{r}_i\cdot e^{-\frac{1}{\gamma}\cdot\frac{\epsilon}{\tilde{c}(\epsilon) }}-\sum_{i\in {\mathcal{M}\backslash{{\chI}}}}\hat{r}_i\cdot e^{-\frac{1}{\gamma}\cdot\frac{\epsilon}{\tilde{c}(\epsilon) }}-\frac{0.968}{(1-\hat{\mu}/\gamma)}.\label{3.13}
\end{align}
On the other hand, by applying a suitable rank reduction procedure if necessary, we can
assume that $\rank (\bhX^{(2)})\leq \sqrt{2M}$; see
\cite[Section 5]{Zhang04complexquadratic}.  We apply the
randomization procedure to $\bhX^{(2)}$. From \eqref{3.13} we have, for any $\gamma>0$ and $\hat{\mu}>0$,
\begin{align}
&{\rm Prob}\left( \hat{t}^2\geq \gamma, \ \|\bhx^{(2)}\|^2\geq \hat{\mu} \trace[\bhX^{(2)}]\right)\nonumber\\
&\ge 1-K\cdot e^{-\frac{1}{\gamma}\cdot\frac{\epsilon}{\tilde{c}(\epsilon) }}-\frac{0.968}{(1-\hat{\mu}/\gamma)}.\label{3.14}
\end{align}
Setting $$\gamma=\frac{\epsilon}{\tilde{c}(\epsilon)} \cdot\frac{1}{2\ln (50K)} ,\quad
\hat{\mu}=0.01\gamma,$$ we have that the right hand side of \eqref{3.14}
 is larger than $
1-\frac{1}{50}-\frac{0.968}{0.99}=0.00222\ldots$, which
then proves the desired bound.
\end{proof}

\begin{theorem}\label{max_complex}
When $\mathbb{F}=\mathbb{C}$, for complex QCQP problem \eqref{1.4} and its SDP relaxation \eqref{problemSDP}, we have
$$v^{\max}_{{\rm QP}}\geq \frac{\epsilon}{ \tilde{c}(\epsilon)}\cdot \frac{1}{4\ln (100K)} v^{\max}_{{\rm SDP}},$$
where $\tilde{c}(\epsilon)$ is defined by \eqref{eq:tildece}.
\end{theorem}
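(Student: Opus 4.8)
The plan is to mirror the proof of Theorem~\ref{max_real} verbatim, only replacing the real tail estimate \eqref{lemmainequality3} of Lemma~\ref{lemma:3.1} by its complex counterpart \eqref{lemmainequality2} and recalibrating the two free parameters. First I would use the elementary observation that $0<\frac{\epsilon}{\tilde c(\epsilon)}\le 1$, which is immediate from \eqref{eq:tildece} together with $0<\epsilon<1$, to bound the second exponential in \eqref{lemmainequality2}: since $\frac{1}{\gamma}\ge\frac{1}{\gamma}\cdot\frac{\epsilon}{\tilde c(\epsilon)}$, we have $e^{-1/\gamma}\le e^{-\frac{1}{\gamma}\frac{\epsilon}{\tilde c(\epsilon)}}$. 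This only weakens the lower bound and makes the two sums over $\chI$ and $\cM\setminus\chI$ carry a common exponential factor.

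Next I would invoke the complex rank reduction procedure (cf.\ \cite{Zhang04complexquadratic}) to assume $\rank(\bhX^{(2)})\le\sqrt{2M}$, so that $\hat r_i=\min\{\rank(\bH_i),\rank(\bhX^{(2)})\}\le\min\{\rank(\bH_i),\sqrt{2M}\}$ and the two merged sums are together at most $K=\sum_{i=1}^{M}\min\{\rank(\bH_i),\sqrt{2M}\}$. After this collapse, Lemma~\ref{lemma:3.1} gives, for every $\gamma>0$ and $\hat\mu>0$,
\begin{align*}
{\rm Prob}\left(\hat t^2\ge\gamma,\ \|\bhx^{(2)}\|^2\ge\hat\mu\,\trace[\bhX^{(2)}]\right)\ge 1-K\,e^{-\frac{1}{\gamma}\cdot\frac{\epsilon}{\tilde c(\epsilon)}}-\frac{2}{e\,(1-\hat\mu/\gamma)}.
\end{align*}

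The decisive step is the calibration of $\gamma$ and $\hat\mu$. I would set $\gamma=\frac{\epsilon}{\tilde c(\epsilon)}\cdot\frac{1}{\ln(100K)}$, which forces $\frac{1}{\gamma}\cdot\frac{\epsilon}{\tilde c(\epsilon)}=\ln(100K)$ and hence $K\,e^{-\ln(100K)}=\frac{1}{100}$; and $\hat\mu=\frac14\gamma=\frac{\epsilon}{\tilde c(\epsilon)}\cdot\frac{1}{4\ln(100K)}$, so that the tail term becomes $\frac{2}{e(1-1/4)}=\frac{8}{3e}$. The right-hand side is then at least $1-\frac{1}{100}-\frac{8}{3e}>0$, a strictly positive probability independent of the problem size. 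On this event the vector $\bhx$ produced by Table~\ref{randomprocedure_2} is feasible for \eqref{1.4} and satisfies $\|\bhx^{(2)}\|^2\ge\hat\mu\,\trace[\bhX^{(2)}]=\hat\mu\,v^{\max}_{{\rm SDP}}$, whence $v^{\max}_{{\rm QP}}\ge\|\bhx^{(2)}\|^2\ge\frac{\epsilon}{\tilde c(\epsilon)}\cdot\frac{1}{4\ln(100K)}\,v^{\max}_{{\rm SDP}}$, which is the claimed bound.

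I expect the main obstacle to be numerical rather than structural. Because the complex tail constant $\frac{2}{e}\approx 0.736$ (inherited from the Weibull/Markov estimate used to prove \eqref{lemmainequality2}, cf.\ the proof of \cite[Theorem~5]{Luo07approximationbounds}) is already close to $1$, the resulting term $\frac{8}{3e}\approx 0.981$ leaves only a razor-thin margin; the constants $100$ inside the logarithm and the ratio $\hat\mu/\gamma=\frac14$ must be chosen in tandem so that $1-\frac{1}{100}-\frac{8}{3e}$ stays positive, since a looser choice of $\hat\mu/\gamma$ would immediately push the tail term past $1$ and destroy the argument. Verifying this positivity, and confirming that the complex analogue of \cite[Lemma~5]{Luo07approximationbounds} supplies exactly the factor used in the exponent of \eqref{lemmainequality2}, is where the care is required; the remainder is a direct transcription of the real-case proof.
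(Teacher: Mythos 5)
Your proposal is correct and follows essentially the same route as the paper's own proof: merge the two sums of Lemma~\ref{lemma:3.1} via $e^{-1/\gamma}\le e^{-\frac{1}{\gamma}\frac{\epsilon}{\tilde c(\epsilon)}}$, bound them by $K$ after rank reduction, and set $\gamma=\frac{\epsilon}{\tilde c(\epsilon)}\cdot\frac{1}{\ln(100K)}$, $\hat\mu=\frac14\gamma$ to get success probability $1-\frac{1}{100}-\frac{8}{3e}>0$. The only cosmetic difference is that you invoke the rank bound $\sqrt{2M}$ while the paper uses the sharper complex bound $\rank(\bhX^{(2)})\le\sqrt{M}$ from \cite{Zhang04complexquadratic}; since either bound keeps $\sum_i\hat r_i\le K$ with $K$ as defined in Theorem~\ref{max_real}, your argument goes through unchanged.
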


\begin{proof}
By applying a suitable rank reduction procedure if necessary, we can
assume that $\rank (\bhX^{(2)})\leq \sqrt{M}$; see
\cite[Section 5]{Zhang04complexquadratic}.  We apply the
randomization procedure to $\bhX^{(2)}$. By using Lemma
\ref{lemma:3.1}, we have, for any $\gamma>0$ and $\hat{\mu}>0$,
\begin{align}
&{\rm Prob}\left( \hat{t}^2\geq \gamma, \ \|\bhx^{(2)}\|^2\geq \hat{\mu} \trace[\bhX^{(2)}]\right)\nonumber\\
&\geq 1-K\cdot e^{-\frac{1}{\gamma}\cdot\frac{\epsilon}{\tilde{c}(\epsilon)}}-\frac{2}{e(1-\hat{\mu}/\gamma)}.\label{3.11}
\end{align}
The proof is similar to that of \eqref{3.14}. Setting $$\gamma=\frac{\epsilon}{\tilde{c}(\epsilon) }\cdot \frac{1}{\ln (100K)},\quad
\hat{\mu}=\frac{1}{4}\gamma,$$ we have that the right hand side of
\eqref{3.11} is larger than $
1-\frac{1}{100}-\frac{8}{3e}=0.00899\ldots$, which
then proves the desired bound.
\end{proof}

Theorem \ref{max_real} and Theorem \ref{max_complex} show that for fixed $0<\epsilon<1$, the approximation ratio for the maximization model is the same order of $\cO(1/\log M)$. It is tight for general $Q$, since in a special case, i.e., $Q=M$,  $\cO(1/\log M)$ has been proved to be a tight bound \cite{Nemirovski99}.

We note that for the maximization model, it is possible to use
different rounding techniques to obtain $\bx^{(2)}$. For example we
can decompose the matrix $\bhX^{(2)}$ using the techniques proposed
in \cite{Nemirovski99} and \cite{He2008}. Moreover, if the decomposition techniques are used, through similar analysis (as in \cite{Nemirovski99}), we also can obtain the following approximation ratio
\begin{equation}v^{\max}_{{\rm QP}}\geq \frac{\epsilon}{\tilde{c}(\epsilon) }\cdot\frac{1}{2\ln(2M\mu)} v^{\max}_{{\rm SDP}},\label{max_ref21}\end{equation}
where $\mu$ is the same as defined in \cite{Nemirovski99}. This ratio is sharper than our result given in Theorem \ref{max_real}. However, it seems that the algorithm that proposed in Table \ref{randomprocedure_2} and the corresponding analysis are simpler than that of \cite{Nemirovski99}. That's the reason we do not show the details of the above result.  Nevertheless, both these two approaches lead to an approximation bound of the same order in terms of $M$.

As pointed out by a referee, there exist different relaxations  of \eqref{P2}, one of which is given below.
\begin{align}
\max &\quad\|\bw\|^2\nonumber\\
{\rm s.t.}&\quad \bw^H \bH_i \bw\le\epsilon,~i=1,\cdots, M.\label{relaxation_max}
\end{align}
Denote the optimal objective value to be $v^*$. The SDP relaxation of \eqref{relaxation_max} is given by
\begin{align}
\max &\quad \trace{(\bX)}\nonumber\\
{\rm s.t.}&\quad \trace{(\bH_i \bX)}\le \epsilon,~i=1,\cdots, M.\label{SDP_max}
\end{align}
Denote the optimal objective value of \eqref{SDP_max} to be $v_{\rm SDP}. $ By using Ben-Tal et al 's result \cite{Bental02}, we can find  a feasible solution $x$ such that
\begin{align}
&\bx^H \bH_i \bx\le \epsilon,~i=1,\cdots, M\nonumber\\
& \|x\|^2\geq O\left(\frac{1}{\ln M}\right)v_{\rm SDP}\geq  O\left(\frac{\epsilon}{\ln M}\right)v^*.\label{result_max}
\end{align}
The analysis of this method is much simpler. However, we prefer the relaxation \eqref{problemSDP} due to two reasons.
Firstly, the approximation bound for the SDP relaxation problem \eqref{problemSDP} is a little sharper than the one obtained above. It is easy to see that the coefficient in the ratio for  \eqref{problemSDP} is $\frac{\epsilon}{\tilde{c}(\epsilon)}$, as shown in Theorem \ref{max_real}, Theorem \ref{max_complex} and \eqref{max_ref21},  which is larger than  $\epsilon$ in \eqref{result_max}. Moreover, it is obvious that the optimal value of the SDP relaxation problem \eqref{problemSDP}  is always larger than that of \eqref{SDP_max}, i.e.,
$$v^{\max}_{{\rm SDP}} \ge  v_{\rm SDP}.$$
Thus, although the two ratios are of the same order in terms of $M$ and $\epsilon$, the approximation bound for \eqref{problemSDP} is slightly sharper than that of \eqref{SDP_max}.
Secondly,  the SDP relaxation problem \eqref{SDP_max} is less attractive, because it requires all $M$ constraints to be less than $\epsilon$, which is too restrictive especially for small $\epsilon$.  This observation has also been confirmed by numerical experiments.

{\bf Remark.} We mention that all our theoretic results for both the
minimization and the maximization models could be extended to
general strictly convex quadratic objectives, by using a simple
variables substitution. Specifically, if the objective is $ \bw^H
\bA\bw$ with $\bA\succ 0$, by letting $\bhw=\bV\bw$ and
$\bhH_i=\bV^{-H}\bH_i\bV^{-1}$ where $\bV\succ 0$ satisfies
$\bV^{H}\bV=A$, then the corresponding models with the new variables
$\bhw$ and the new constraint matrices $\bhH_i$ are the same with
our current models.


 \section{Numerical Experiments}
 \begin{figure}[t]
\caption{Upper bound on $v^{\min}_{{\rm QP}}/v^{\min}_{{\rm SDP}}$
for $M=8$, $Q=6$, $N=8$, $300$ realizations of real Gaussian i.i.d.
$\bh_i$ for $i=1,\cdots, M$ in the real case.} \label{fig_real1}
\includegraphics[width=\textwidth]{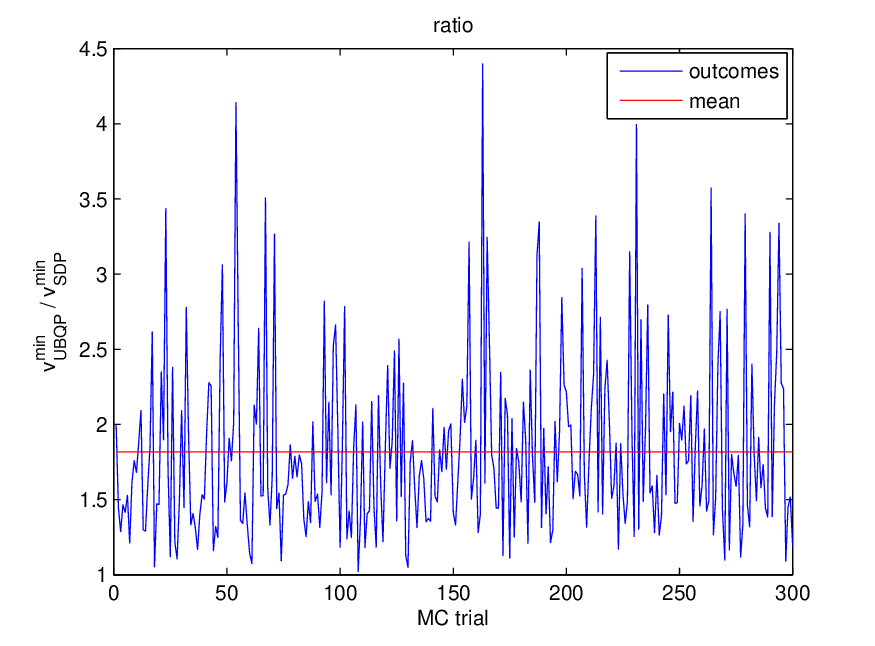}
\end{figure}
\begin{figure}[t]
\caption{Histogram of the outcomes in Figure \ref{fig_real1}.}
\label{fig_real2}
\includegraphics[width=\textwidth]{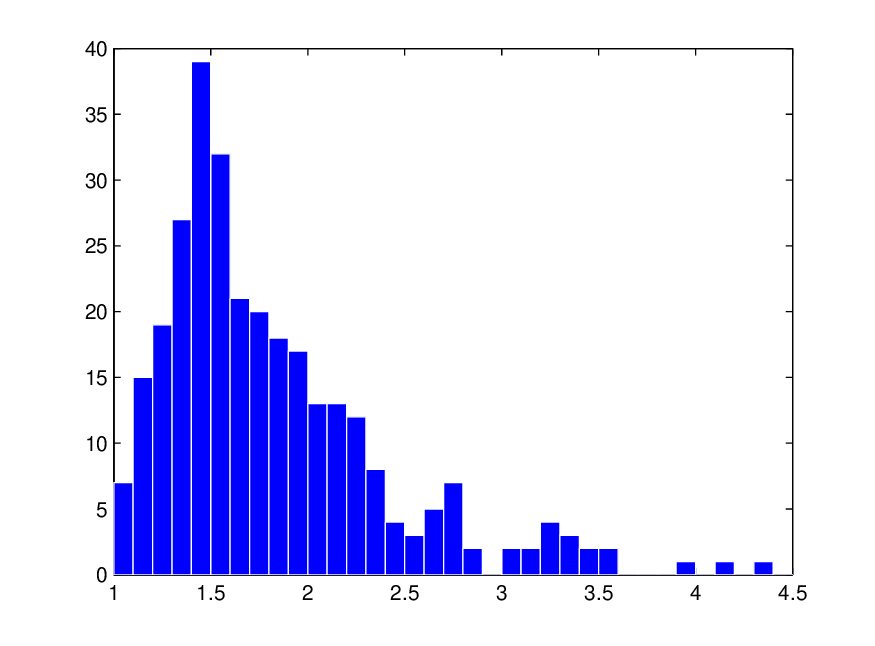}
\end{figure}
\begin{table}
\caption{Mean and standard deviation of the approximation ratio over
$300$ independent realizations of real Gaussian i.i.d. $\bh_i$
$(i=1,\cdots, M)$, when $\mathbb{F}=\mathbb{R}$.}
\begin{center} \footnotesize
\begin{tabular}{|l|l|ccc|ccc|} \hline
\multirow{2}{*}{$M$} &  \multirow{2}{*}{$Q$}  &
\multicolumn{3}{c}{$N=4$} & \multicolumn{3}{|c|}{$N=8$}\\
\cline{3-8} & &max & mean & Std &max & mean & Std \\ \hline
\multirow{3}{*}{$M=8$} &  $Q=M/4$ & 3.7394 & 2.0348 & 0.2266 & 4.3387 &  2.0392 & 0.2948 \\
&  $Q=M/2$ & 3.9420 & 1.7972 & 0.1828 &3.5232 & 1.7378 & 0.1475\\
&  $Q=3M/4$&4.6973 & 1.7863 & 0.3921 &4.5721 & 1.8130 & 0.3428\\ \hline
\multirow{3}{*}{$M=12$} &  $Q=M/4$ &4.9450& 2.2191 & 0.2451 & 3.9625 &  2.1710 & 0.2304\\
&  $Q=M/2$ &5.8068 & 2.0639 & 0.4564 & 4.3483 & 2.0204 & 0.3241\\
&  $Q=3M/4$ &7.7829 & 2.5970 & 1.3075 & 9.7150 & 2.8277  &  1.9578\\ \hline
\multirow{3}{*}{$M=16$} &  $Q=M/4$ &4.2703& 2.2977 & 0.2410 &4.2980&  2.2117 & 0.1972\\
&  $Q=M/2$ &7.3115 & 2.4463 & 0.9348 & 7.8240 & 2.4166 & 1.1345\\
&  $Q=3M/4$ &10.715 & 3.2272 & 2.3823 & 10.621& 3.7786  & 2.8760\\ \hline
\end{tabular}
\end{center}
\label{min_real}
\end{table}
\begin{table}
\caption{Mean and standard deviation of upper bound ratio over $300$ independent realizations of real Gaussian i.i.d. $\bh_i$ $(i=1,\cdots, M)$, when $\mathbb{F}=\mathbb{C}$.}
\begin{center} \footnotesize
\begin{tabular}{|l|l|ccc|ccc|} \hline
\multirow{2}{*}{$M$} &  \multirow{2}{*}{$Q$}  &
\multicolumn{3}{c}{$N=4$} & \multicolumn{3}{|c|}{$N=8$}\\
\cline{3-8} & &max & mean & Std &max & mean & Std \\ \hline
\multirow{3}{*}{$M=8$} &  $Q=M/4$ & 4.8049 & 2.3720 & 0.2790 & 4.3579 &  2.4239 & 0.2757 \\
&  $Q=M/2$ & 3.7344 & 1.9308 & 0.1443 &3.4477 & 1.9243 & 0.1477\\
&  $Q=3M/4$&2.7549 & 1.5812 & 0.0769 &2.4477 & 1.5860 & 0.0818\\ \hline
\multirow{3}{*}{$M=12$} &  $Q=M/4$ &4.0557& 2.4986 & 0.1938 & 3.7851 &  2.4657 & 0.1998\\
&  $Q=M/2$ &3.2911 & 2.0301 & 0.1483 & 3.2867 & 2.0567 & 0.1190\\
&  $Q=3M/4$ &2.6007 & 1.6451 & 0.0800 & 3.1609 & 1.6693 &  0.0860\\ \hline
\multirow{3}{*}{$M=16$} &  $Q=M/4$ &3.8170& 2.5778 & 0.1647 &4.2616&  2.5852 & 0.1757\\
&  $Q=M/2$ &3.6268 & 2.0908 & 0.0932 & 3.7761 & 2.0729 & 0.1065\\
&  $Q=3M/4$ &2.9218 & 1.8024 & 0.1044 & 3.6056& 1.8344  & 0.1432\\ \hline
\end{tabular}
\end{center}
\label{min_complex}
\end{table}
\begin{figure}[t]
\caption{Upper bound on $v^{\min}_{{\rm QP}}/v^{\min}_{{\rm SDP}}$
for $M=8$, $Q=6$, $N=8$, $300$ realizations of real Gaussian i.i.d.
$\bh_i$ for $i=1,\cdots, M$ in the complex case.}
\label{fig_complex1}
\includegraphics[width=\textwidth]{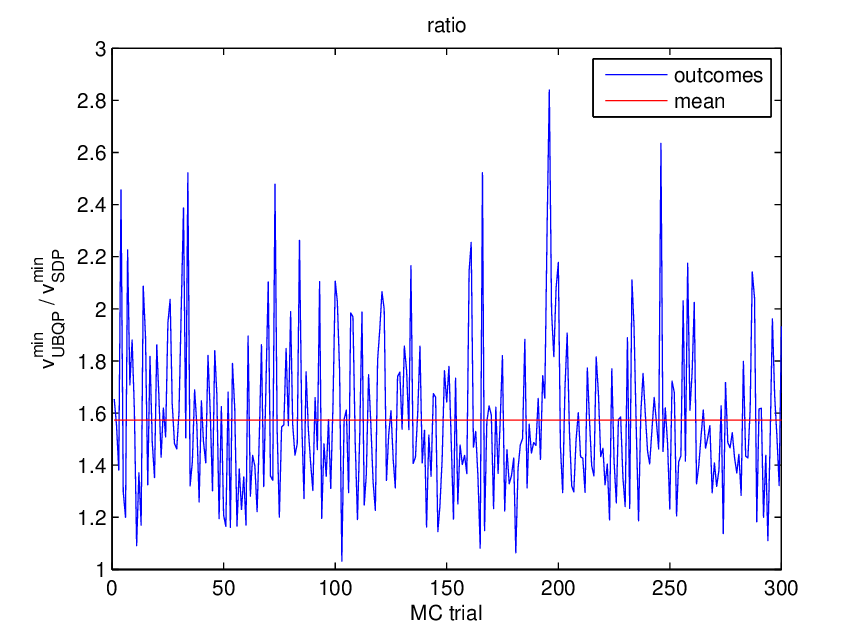}
\end{figure}
\begin{figure}[t]
\caption{Histogram of the outcomes in Figure \ref{fig_complex1}.}
\label{fig_complex2}
\includegraphics[width=\textwidth]{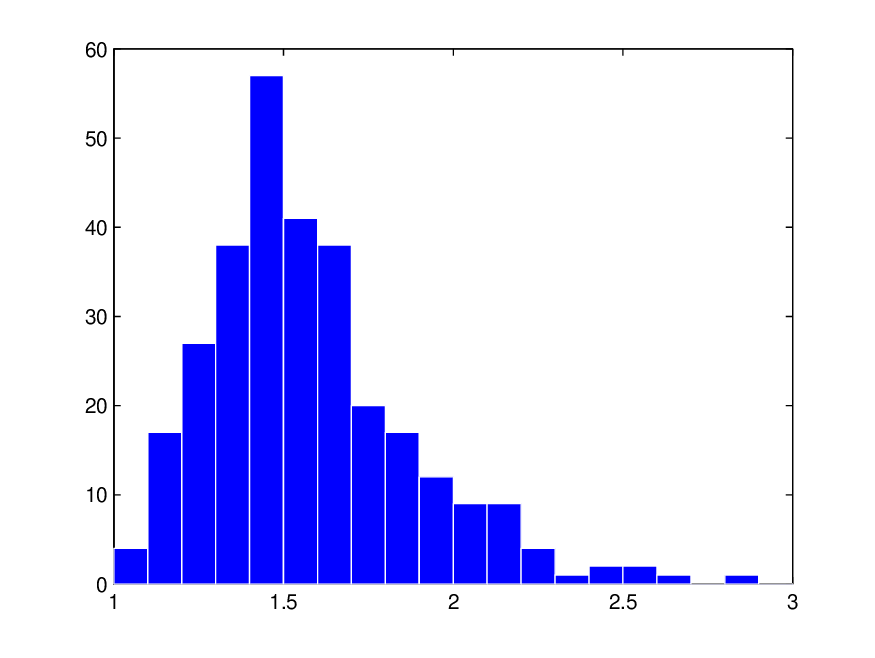}
\end{figure}
%

 While theoretical worst-case analysis is very useful, empirical analysis
 of the ratio $v^{\min}_{{\rm QP}}/v^{\min}_{{\rm SDP}}$ and $v^{\max}_{{\rm QP}}/v^{\max}_{{\rm SDP}}$ through simulations can often provide valuable insights into the true efficacy of the relaxation methods.  Throughout this section, we generate
 the data matrix $\bH_i$ by using $\bH_i=\bh_i\bh_i^H$ $(i=1,\cdots, M)$, with randomly generated vectors $\bh_i$. The SDP relaxation problems are all solved by CVX \cite{CVX2011}.

For the minimization model, we test the proposed procedure listed
in Table \ref{tableRandomize} for $\epsilon=0$ and different choices of $M$, $Q$ and
$N$. The performance of the algorithm with other choices of $\epsilon$ are similar. The Step S3 and Step S4 are repeated by $T=1000$ independent
trials, and the solution generated by $k$th trial is denoted by
$(\bx^{(2)})^k$. Let
$$v^{\min}_{{\rm UBQP}}:=\min_{k=1,\cdots, T}\|(\bx^{(2)})^k\|^2.$$
Clearly $v^{\min}_{{\rm UBQP}}$ is an upper bound for
$v^{\min}_{{\rm QP}}$, as a result, $v^{\min}_{{\rm
UBQP}}/v^{\min}_{{\rm SDP}}$ can be used as an upper bound of the
true approximation ratio (which is difficult to obtain).

Table \ref{min_real} shows the average ratio (mean) of
$v^{\min}_{{\rm UBQP}}/v^{\min}_{{\rm SDP}}$ over 300 independent
realizations of i.i.d. real-valued Gaussian $\bh_i$, $(i=1,\cdots,
M)$ for several combinations of $M$, $Q$ and $N$. The maximum value
(max) and the standard deviation (Std) of $v^{\min}_{{\rm
UBQP}}/v^{\min}_{{\rm SDP}}$ over 300 independent realizations are
also shown in Table \ref{min_real}. Table \ref{min_complex} shows
the corresponding average value, maximum value and the standard
deviation of $v^{\min}_{{\rm UBQP}}/v^{\min}_{{\rm SDP}}$ for
$\mathbb{F}=\mathbb{C}$. These results are significantly better than
what is predicted by our worst-case analysis. In all test examples,
the average values of $v^{\min}_{{\rm UBQP}}/v^{\min}_{{\rm SDP}}$
are lower than $4$ (resp. lower than $3$) when
$\mathbb{F}=\mathbb{R}$ (resp. when $\mathbb{F}=\mathbb{C}$).

Figure \ref{fig_real1} plots $v^{\min}_{{\rm UBQP}}/v^{\min}_{{\rm SDP}}$
for 300 independent realizations of i.i.d. real valued Gaussian
$\bh_i$ ($i=1,\cdots, M$) for $M=8$, $Q=6$ and $N=8$. Figure
\ref{fig_real2} shows the corresponding histogram. Figure
\ref{fig_complex1} and Figure \ref{fig_complex2} show the
corresponding results for i.i.d complex-valued circular Gaussian
$\bh_i$ ($i=1,\cdots,M$). Both the mean and the maximum of the upper
bound $v^{\min}_{{\rm UBQP}}/v^{\min}_{{\rm SDP}}$ are lower in the
complex case.

Moreover, our numerical results also corroborates well with our
theoretic analysis. First, the upper bound of the approximation
ratio is independent of the dimension of $\bw$: the results vary
only slightly for $N=4$ and $N=8$ in both real and complex case.
Second, from Table \ref{min_real}, it can be shown that for fixed
$M$, the maximum value of $v^{\min}_{{\rm UBQP}}/v^{\min}_{{\rm
SDP}}$ over 300 independent trials grows as $Q$ increases in all
test examples except $M=8$ and $N=8$. It corresponds to the result
in Theorem \ref{Approximationratio_R}. While in Table
\ref{min_complex}, for fixed $M$, the maximum value of
$v^{\min}_{{\rm UBQP}}/v^{\min}_{{\rm SDP}}$ over 300 independent
trials becomes smaller as $Q$ increases in all test examples, which is
also consistent with the theoretic result listed in Theorem
\ref{thmApproximationStep2complex}.

The above empirical analysis complements our theoretic worst-case
analysis of the performance of SDP relaxation for the class of mixed integer QCQP
problems considered herein.

 \section{Conclusion and Discussion}
 Motivated by important emerging applications in transmit beamforming
 for joint physical layer multicasting and admission control in wireless networks,
 this paper proposes new SDP relaxation techniques for two classes of nonconvex quadratic optimization
 problems with mixed binary and continuous variables. It is shown that these efficient techniques (polynomial time)
 are guaranteed to provide high quality approximate solutions with a finite approximation ratio that is independent of problem dimension and data matrices. This work extends the existing SDP
relaxation techniques for continuous nonconvex QCQPs.
 Our theoretic analysis provides useful insights on the effectiveness of the new SDP relaxation techniques
 for this class of mixed integer QCQP problems.


It should be pointed out that our worst-case analysis of SDP relaxation performance
is based on a certain structure of the discrete variables in the mixed integer QCQPs. Can we
extend the SDP relaxation techniques and the corresponding analysis to more general MBQCQP problems? For the
maximization model, such extension is not possible, as the
approximation ratio for SDP relaxation can be zero for certain
special instances of the problem (see e.g., Example 3.1). For the
minimization model, the following counter example suggests that this
is also impossible.

%

{\bf Example 5.1. } Consider the following general MBQCQP problem in
minimization form:
\begin{align}
\min_{\bx \in \mathbb{R}^{n}}&\quad\|\bx\|^2\nonumber\\
{\rm s.t.}&\quad \bx^H \bD_i \bx\ge 1,~i=1,\cdots,M\label{Generalmodel}\\
&\quad (\bx[j])^2=1,~j=1,\cdots,l\nonumber
\end{align}
with $l\leq n$ and $\bD_i$, $i=1,\cdots, M$ being arbitrarily positive semidefinite matrices. The
SDP relaxation for this problem can be expressed as
 \begin{align}
\min &\quad \trace [\bX]\nonumber\\
{\rm s.t.}&\quad \trace [\bD_i \bX]\ge 1,~i=1,\cdots,M\label{Generalmodel_SDP}\\
&\quad \bX[j,j]=1,~j=1,\cdots,l\nonumber\\
& \quad \bX \succeq 0\nonumber.
\end{align}
We show in the following that SDP relaxation
\eqref{Generalmodel_SDP} can be very loose for this general case.
Let $M=2$, $l=1$, $n=2$, and let
\[\bD_1=\left[\begin{array}{cc}1-\epsilon& \sqrt{\epsilon(1-\epsilon)}\\\sqrt{\epsilon(1-\epsilon)}& \epsilon\end{array}\right],\qquad
\bD_2=\left[\begin{array}{cc}1&-\sqrt{\frac{\epsilon}{2}}\\-\sqrt{\frac{\epsilon}{2}}&\epsilon\end{array}\right]\]
where $\epsilon$ is some positive constant. It is relatively easy to show
that the optimal solution ${\bar \bx}$ for problem \eqref{Generalmodel} is give by
$$\bar{\bx}[1]=1, \quad \bar{\bx}[2]=\min \left\{\sqrt{\frac{2}{\epsilon}}, \ \frac{\sqrt{1-\epsilon}+1}{\sqrt{\epsilon}}\right\}.$$
Thus, when $\epsilon \rightarrow 0$, we have
$\bar{\bx}[2]\rightarrow +\infty$, implying $\|\bar{\bx}\|^2\rightarrow +\infty$.
However, it can be easily checked that $\bX=\bI$ is a feasible solution for the problem
\eqref{Generalmodel_SDP}.
Therefore, the optimal solution ${\bar \bX}$ for \eqref{Generalmodel_SDP}
should satisfy
$$\trace [{\bar \bX}] \le\trace[\bI]= 2.$$
This example shows that, for a general MBQCQP problem in minimization
form, the approximation ratio can be
arbitrarily large, i.e., the worst-case approximation ratio as
stated in \eqref{defratio} is $\mu=+\infty$.

As a final remark, we mention that the the approximation bounds obtained in this work are due to the special structures of the problems under consideration. For example, the constraints on the discrete variables are relatively simple. Moreover, the discrete variables and the continuous variables are separable, e.g., there are no cross terms between the discrete and the continuous variables, either in the constraints or the objective. These nice  properties allow us to design algorithms that can separately deal with the discrete and the continuous variables.


\section*{Acknowledgments}
We are very grateful to an anonymous referee for his/her insightful comments which have helped to improve the paper.

\end{document}